\newtheorem{theorem}{Theorem}[section]
\newtheorem{lemma}[theorem]{Lemma}
\newtheorem{definition}[theorem]{Definition}
\newtheorem{lemma-definition}[theorem]{Lemma-Definition}
\newtheorem{corollary}[theorem]{Corollary}
\newtheorem{proposition}[theorem]{Proposition}
\newtheorem{lem-def}[theorem]{Lemma-Definition}
\newcommand{\R}{\mathbb R}
\newcommand{\Z}{\mathbb Z}
\newcommand{\Q}{\mathbb Q}
\newcommand{\F}{\mathbb F}
\def\op{\operatorname}
\def\as#1{\renewcommand\arraystretch{#1}}
\def\bb{{\mathcal B}}
\def\sp\operatorname{Spec}
\def\dsc{\mathrm{Disc}}
\def\diso{\lower.4ex\hbox{$\downarrow$}\raise.4ex\hbox{\mbox{\scriptsize
$\wr$}}}
\def\gen#1{\big\langle\, {#1} \,\big\rangle}
\def\gl#1#2{\op{GL}_{#1}(#2)}
\def\iso{\ \lower.3ex\hbox{\as{.08}$\begin{array}{c}\lra\\\mbox{\tiny $\sim\,$}\end{array}$}\ }
\def\Ks{K^{\op{sep}}}
\def\lg{l\raise.6ex\hbox to.2em{\hss.\hss}l}
\def\lra{\longrightarrow}
\def\m{{\mathfrak m}}
\def\oo{\mathcal{O}}
\def\orb{\hbox to  .3em{$\backslash$}\backslash}
\def\ord{\op{ord}}
\def\p{\mathfrak{p}}
\def\P{\mathfrak{P}}
\def\res{\operatorname{Res}}
\def\t{\theta}
\def\red{\operatorname{red}}
\def\bb{{\mathcal B}}
\def\gl{\mathrm{GL}}
\newcounter{cs}
\newcommand{\fcasos}{\end{itemize}\setcounter{cs}{1}}
\newfont{\tit}{cmr12 scaled \magstep3}
\def\ww#1_#2{w_#2(#1)}
\def\WW#1_#2{w_#2\Big(#1\Big)}
\def\Kp{K_\p}
\def\kp{\hat{A}_\p}
\def\red{\operatorname{red}}
\def\sred{\operatorname{sred}}
\def\o1{{\mathcal O}_F}
\def\Ks{K^\mathrm{sep}}
\def\ti{{\bf{t}}}
\def\slp{\operatorname{Slopes}}
\def\tr{\mathrm{Trunc}}
\title{Computation of Integral Bases }
\thanks{This research was supported by MTM2012-34611 and MTM2013-40680 from the
Spanish MEC and by the Netherlands Organisation for 
Scientific Research (NWO) under grant 613.001.011.}
\author{Jens-Dietrich Bauch}
\address{Department of Mathematics and Computer Science
Technische Universiteit Eindhoven
P.O. Box 513, 5600 MB Eindhoven, The Netherlands}
\email{j.bauch@tue.nl}
\keywords{p-integral bases, maximal order, Montes algorithm, Dedekind domain}
\begin{document}

\begin{abstract}
Let $A$ be a Dedekind domain, $K$ the fraction field of $A$, and $f\in A[x]$ a monic irreducible separable polynomial. For a given non-zero prime ideal $\p$ of $A$ we present in this paper a new characterization of a $\p$-integral basis of the extension of $K$ determined by $f$. This characterization yields in an algorithm to compute $\p$-integral bases, which is based on the use of simple multipliers that can be constructed with the data that occurs along the flow of the Montes Algorithm. Our construction of a $\p$-integral basis is significantly faster than the similar approach from \cite{HN} and provides in many cases a priori a triangular basis.
\end{abstract}
\maketitle

\section*{Introduction}

Let $A$ be a Dedekind domain, $K$ the fraction field of $A$, and $\p$ a non-zero prime ideal of $A$. By $A_\p$ we denote the localization of $A$ at $\p$. Let $\pi\in\p$ be a prime element of $\p$.

Denote by $\t\in \Ks$ a root of a monic irreducible separable polynomial $f\in A[x]$ of degree $n$ and let $L=K(\t)$ be the finite separable extension of $K$ generated by $\t$. We denote by $\oo$ the integral closure of $A$ in $L$ and by $\oo_\p$ the integral closure of $A_\p$ in $L$. A $\p$-integral basis of $\oo$ is an $A_\p$-basis of $\oo_\p$ (cf. Definition \ref{DefpInt}).

If $A$ is a PID, then $\oo$ is a free $A$-module of rank $n$, and its easy to construct an $A$-basis of $\oo$ from the different $\p$-integral bases, for prime ideals $\p$ of $A$ that divide the discriminant of $f$.

In this work we follow the approach from \cite{HN} to apply the notion of reduceness in the context of integral bases. By weakening the concept of reduceness we deduce a new characterization of $\p$-integral bases (Theorem \ref{newdesofint}). This yields in an algorithm to compute a $\p$-integral basis: We construct for any prime ideal $\P$ of $\oo$ lying over $\p$ a local set $\bb^*_\P\subset \oo$ and a multiplier $z_\P\in L$ such that $\cup_{\P|\p}z_\P\bb^*_\P$ is a $\p$-integral basis of $\oo$, where $z_\P\bb^*_\P$ denotes the set we obtain by multiplying all elements in $\bb_\P^*$ by $z_\P$. The construction of these local sets and the multipliers is based on the \textit{Okutsu-Montes (OM) representations} of the prime ideals of $\oo$ lying over $\p$, provided by the Montes algorithm. In comparison with the existing methods from \cite{NewC} and \cite{HN} our construction of the multipliers is much simpler (and faster) and results in many cases directly in a triangular $\p$-integral basis $\bb$ of $\oo$, that is, $\bb=\{b_0,\dots,b_{n-1}\}$, where $b_{i}=g_{i}(\t)/\pi^{m_i}$ with $g_{i}\in A[x]$, monic of degree $i$ and $m_i\in\Z$. Hence the transformation into a basis in HNF becomes especially efficient.

The article is divided in the following sections. In section \ref{Sec1} we summarize the Montes algorithm briefly and introduce the basic ingredients of our algorithm for the computation of a $\p$-integral basis. That is, we define types, Okutsu invariants, and a local set $\bb_\P\subset A[x]$ (cf. Definition \ref{divpolset}) for a prime ideal $\P$ of $\oo$ lying over $\p$. In section \ref{Sec2} we introduce the notion of (semi)-reduced bases, which provides a new characterization of $\p$-integral bases (Theorem \ref{newdesofint}) and a new method of constructing multipliers $z_\P$, for any prime ideal $\P$ of $\oo$ over $\p$, such that the union of the sets $\{z_\P\cdot b(\t)/\pi^{m_b}\mid b\in \bb_\P\}$, for $\P|\p$ and certain integers $m_b$, is a $\p$-integral basis. If we assume that $A/\p$ is finite with $q$ elements and $\mathcal{R}$ is a set of representatives of $A/\p$ then we will see that the complexity of the method is dominated by $O\left(n^{1+\epsilon}\delta\log q+n^{1+\epsilon}\delta^{2+\epsilon}+n^{2+\epsilon}\delta^{1+\epsilon}\right)$ operation in $\mathcal{R}$ (Lemma \ref{compli}), where $\delta := v_\p(\dsc f)$. In section \ref{Sec4} we consider the practical performance of our method in the context of algebraic function fields. We have implemented the method for the case $A=k[t]$, where $k$ is a finite field or $k=\Q$. The package can be downloaded from \url{https://github.com/JensBauch/Integral_Basis}. 

\section{Montes algorithm}\label{Sec1}

We consider the monic separable and irreducible polynomial $f\in A[x]$. For a non-zero prime ideal $\p$ of $A$ we denote the induced discrete valuation by $v_\p: A\rightarrow \Z\cup \{\infty\}$ and the completion of $K$ at $\p$ by $K_\p$. The valuation $v_\p$ extends in an obvious way to $K_\p$. Denote by $\hat{A}_\p$ the valuation ring of $v_\p$ and by $\m_\p=\p \hat{A}_\p$ its maximal ideal.

By the classical theorem of Hensel \cite{Hensel} the prime ideals of $\oo$ lying over $\p$ are in one-to-one correspondence with the monic irreducible factors of $f$ in $\hat{A}_\p[x]$. 

In this section we describe the Montes algorithm, which determines for the input of $f$ and $\pi$ a parametrization of the irreducible factors of $f$ in $\hat{A}_\p[x]$. Let $\P$ be a prime ideal of $\oo$ lying over $\p$ and denote by $f_\P\in \hat{A}_\p[x]$ the corresponding irreducible factor of $f$. Then, the Montes algorithm produces a list of data, a so-called \emph{type},
$$
\ti=(\psi_0;(\phi_1,\lambda_1,\psi_1);\dots ;(\phi_{r+1},\lambda_{r+1},\psi_{r+1})),
$$
which is a representation of the irreducible factor $f_\P$ and therefore a representation the prime ideal $\P$. We call this representation an OM-representation of $\P$ (cf. Definition \ref{omrep}). 

The Montes algorithm can be seen as a factorization algorithm, which detects the factorization of $f\in \hat{A}_\p[x]$, but never computes it. To this purpose, a kind of Hensel's lemma of higher order is applied \cite[Theorem 3.7]{HN}. At any level $i\geq 1$, besides the fundamental data $\phi_i\in A[x],\ \lambda_i\in\Q_{>0},\ \psi_i\in\F_i[y]$, where $\F_i$ is a finite extension of $k_\p:=A/\p$, the type supports:
\begin{itemize}
\item $N_i:\hat{A}_\p[x]\rightarrow 2^{\R^2}$ a \textbf{Newton polygon operator},
\item $R_i:\hat{A}_\p[x]\rightarrow \F_i[y]$ a \textbf{residual polynomial operator},
\item $v_{i-1}:K_\p(x)\rightarrow \Z\cup\{\infty\}$ a \textbf{discrete valuation}.
 
\end{itemize}
Below we give a brief overview of the Montes algorithm, the OM-representation of prime ideals, and certain applications, which will be useful for further considerations. The results are mainly extracted from \cite{HN1} and \cite{NewC}. A comprehensive explanation of the Montes algorithm can be found in \cite{HN2}.

\subsection{Types}\label{omreppi}
We consider $v_\p:K_\p\rightarrow \Q\cup\{\infty\}$ the by $\p$ induced valuation on $K_\p$ and extend it to a discrete valuation $v_0$ on $K_\p(x)$, determined by
\begin{align}\label{v0}
v_0:K_\p[x]\rightarrow \Z\cup\{\infty\},\quad v_0(c_0+\cdots +c_rx^r):=\min\{v_\p(c_i)\mid 0\leq j\leq r\}.
\end{align}
\underline{\emph{Types of order zero}}\medskip\\
We denote by $\F_0:=k_\p=A/\p$ and define the $0$-th residual polynomial operator
$$
R_0:\hat{A}_\p[x]\rightarrow \F_0[y],\quad g(x)\mapsto \overline{g(y)/\pi^{v_0(g)}},
$$
where $\overline{\phantom{ii}}:\hat{A}_\p[y]\rightarrow \F_0[y]$ is the natural reduction map and $\pi\in\p$ a uniformizer. A \emph{type of order zero},
$$
\ti=(\psi_0),
$$
is determined by $\psi_0(y)\in\F_0[y]$, a monic irreducible polynomial. A \emph{representative} of $\ti$ is any monic polynomial $\phi_1(x)\in A[x]$ such that $R_0(\phi_1)=\psi_0$. 

We consider the polynomial $f$ in $A[x]$. From a factorization of $R_0(f)(y)=\psi_{1,0}^{n_1}\cdots \psi_{\kappa,0}^{n_\kappa}$ into the product of irreducible monic polynomials $\psi_{i,0}(y)\in\F_0[y]$ we deduce types of order zero. Each irreducible factor $\psi_{i,0}(y)$ singles out one type of order zero. For convenience, we consider one fixed factor, denote it by $\psi_0$, and consider a representative $\phi_1\in A[x]$. Let $m_1:=\deg \phi_1$.\medskip\\
\underline{\emph{Types of order one}:}\\

\textbf{Newton polygon operator.} The Newton polygon of a polynomial $g(x)\in K_\p[x]$ is determined by the pair $(v_0,\phi_1)$. If $\sum_{s\geq 0} a_s(x)\phi_1(x)^s$ is the $\phi_1$-adic development of $g(x)$, then 
\begin{align}\label{NP}
N_1(g):=N_{v_0,\phi_1}(g)
\end{align}
is defined to be the lower convex hull of the set of points of the plane with coordinates $(s,v_0(a_s(x)\phi_1(x)^s))$. However, we only consider the principal part of this polygon, $N^{-}_1(g)=N^{-}_{v_0,\phi_1}(g)$, formed by the sides of negative slopes of $N_1(g)$. The length $l(N^{-}_1(g))$ of the polygon $N^{-}_1(g)$ is, by definition, the abscissa of its right end point. The typical shape of $N^{-}_1(g)$ for a monic polynomial $g$ is as shown below.

\begin{figure}[!h]\caption{Newton polygon of $g$.}
\label{fig1}
\begin{center}
\setlength{\unitlength}{5mm}
\begin{picture}(12,5)
\put(1.3,3.89){$\bullet$}
\put(1.82,2.84){$\bullet$}
\put(2.30,1.86){$\bullet$}
\put(3.30,1.86){$\bullet$}
\put(6.30,0.8){$\bullet$}
\put(5.30,-0.2){$\bullet$}
\put(8.30,-0.2){$\bullet$}
\put(10.30,-0.2){$\bullet$}

\put(-1,0){\line(1,0){12}}\put(0,-1){\line(0,1){6}}
\put(2.42,2.2){\line(-1,2){1}}\put(2.44,2.2){\line(-1,2){1}}
\put(5.50,0.0){\line(-3,2){3}}\put(5.52,0.0){\line(-3,2){3}}
\put(5.5,0.02){\line(1,0){5}}
\put(-0.5,-0.7){\begin{footnotesize}$0$\end{footnotesize}}
\put(2.5,2.9){\begin{footnotesize}$N^{-}_1(g)$\end{footnotesize}}
\put(0.7,-0.7){\begin{footnotesize}$\ord_{\phi_1}(g)$\end{footnotesize}}
\put(4.7,-0.7){\begin{footnotesize}$l(N^{-}_1(g))$\end{footnotesize}}
\put(9.1,-0.7){\begin{footnotesize}$\lfloor \deg g/m_1\rfloor$\end{footnotesize}}

\multiput(1.45,-0.03)(0,0.4){11}
{\line(0,1){0.2}}
\end{picture}
\end{center}
\end{figure}
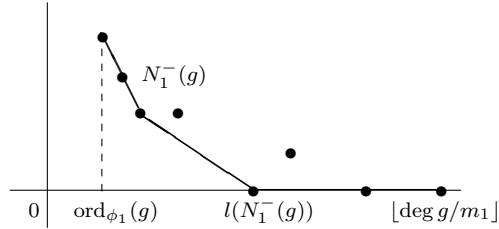
\bigskip
\textbf{Residual polynomial operator.} We fix $\F_1:=\F_0(y)/(\psi_0(y))$ and we set $z_0$ to be the class of $y$ in $\F_1$, so that $\F_1=\F_0[z_0]$. The polygon $N:=N^{-}_1(g)$ has a residual coefficient $c_s$ at each integer abscissa, $\ord_{\phi_1}g\leq s\leq l(N)$, defined as follows:
$$
c_s:=\begin{cases}
0, &\text{if }(s,v_0(a_s)) \text{ lies above }N,\\ 
R_0(a_s)(z_0)\in \F_1, &\text{if }(s,v_0(a_s)) \text{ lies on }N. 
\end{cases}
$$

\noindent Denote by $\slp(N)$ the set of slopes of $N$. Given any $\lambda\in\Q_{>0}$, we define:
$$
S_{\lambda}(N):=\{(x,y)\in N\mid y+{\lambda}x \text{ is minimal}\}=\begin{cases}\text{ a vertex,} &\text{ if }{-\lambda}\notin \slp(N),\\\text{ a side,}&\text{ if }{-\lambda}\in \slp(N).
\end{cases}
$$
The picture below illustrates both possibilities. In this picture $L_{{\lambda}}$ is the line of slope $-\lambda$ having first contact with $N$ from below.

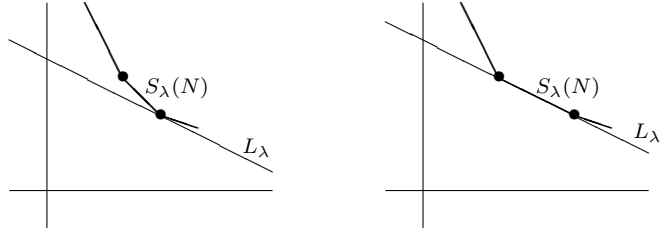
\begin{figure}[h]\caption{${\lambda}$-component of a polygon.}\label{figComponent}
\begin{center}
\setlength{\unitlength}{5mm}
\begin{picture}(15,6)
\put(2.85,1.85){$\bullet$}\put(1.85,2.85){$\bullet$}
\put(-1,0){\line(1,0){7}}\put(0,-1){\line(0,1){6}}
\put(3,2){\line(-1,1){1}}\put(3.02,2){\line(-1,1){1}}
\put(3,2){\line(3,-1){1}}\put(3.02,2){\line(3,-1){1}}
\put(2,3){\line(-1,2){1}}\put(2.02,3){\line(-1,2){1}}
\put(6,.5){\line(-2,1){7}}\put(5.2,1){\begin{footnotesize}$L_{{\lambda}}$\end{footnotesize}}
\put(13.85,1.85){$\bullet$}\put(11.85,2.85){$\bullet$}
\put(9,0){\line(1,0){7}}\put(10,-1){\line(0,1){6}}
\put(14,2){\line(-2,1){2}}\put(14.02,2){\line(-2,1){2}}
\put(14,2){\line(3,-1){1}}\put(14.02,2){\line(3,-1){1}}
\put(12,3){\line(-1,2){1}}\put(12.02,3){\line(-1,2){1}}
\put(16,1){\line(-2,1){7}}\put(15.6,1.4){\begin{footnotesize}$L_{{\lambda}}$\end{footnotesize}}
\put(13,2.6){\begin{footnotesize}$S_{\lambda}(N)$\end{footnotesize}}
\put(2.6,2.6){\begin{footnotesize}$S_{\lambda}(N)$\end{footnotesize}}
\end{picture}
\end{center}
\end{figure}

In any case, $S_{\lambda}(N)$ is a segment of $\R^2$ with end points having integer coordinates. Any such segment has a degree. If ${\lambda} = h/e$ with $h,e$ positive coprime integers, the degree of $S_{\lambda}(N)$ is defined as:
$$
d := d(S_{\lambda}(N)) := l(S_{\lambda}(N))/e,
$$
where $l(S_{\lambda}(N))$ is the length of the projection of $S_{\lambda}(N)$ to the horizontal axis. Note that $S_{\lambda}$ splits into $d$ minimal subsegments, whose end points have integer coordinates. Denote $s_0$ and $s_1$ the abscissas of the endpoints of $S_{\lambda}$. Then, the abscissas of the points on $S_{\lambda}$ with integer coordinates are given by $s_0,s_0+e,\dots,s_1=s_0+de$.
We define the residual polynomial of first order of $f(x)$, with respect to $v_0,\phi_1,{\lambda}$, as:
$$
R_{v_0,\phi_1,{\lambda}}(g)(y):=c_{s_0}+c_{s_0+e}y+\cdots+c_{s_1}y^d\in\F_1[y].
$$
Note that $c_{s_0}c_{s_1}\neq 0$; thus, the degree of $R_{v_0,\phi_1,{\lambda}}(g)$ is always equal to $d$. Let $h_1,e_1$ be coprime positive integers and consider the positive rational number $\lambda_1=h_1/e_1$. Let $\psi_1(y)\in \F_1[y]$ be a monic irreducible polynomial with $\psi_1(y)\neq y$. Then,
$$
\ti=(\psi_0;(\phi_1,\lambda_1,\psi_1)),
$$
is called a \emph{type of order one}. Such a type supports a \emph{residual polynomial operator} of the first order $R_1:=R_{v_0,\phi_1,{\lambda_1}}$. Given any such type, one can compute a \emph{representative} of $\ti$; that is, any monic polynomial $\phi_2(x)\in A[x]$ of degree $e_1 \deg \psi_1 \deg \phi_1$, satisfying $R_1(\phi_2)(y)=\psi_1(y)$. This polynomial is necessarily irreducible in $\hat{A}_\p[x]$.
\bigskip

\textbf{Discrete valuation.} The triple $(v_0,\phi_1,{\lambda_1})$ also determines a discrete valuation on $K_\p(x)$ as follows: For $g\in K_\p[x]$ nonzero, we consider the intersection point $(0,H)$ of the vertical axis with the line of slope $-\lambda_1$ containing $S_{\lambda_1}(N^{-}_1(g))$. Then, we set $v_1(g(x)):=e_1H$.\medskip\\
\underline{\emph{Types of order} $r$:}\medskip\\
Now we may start over again with the pair $(v_1,\phi_2)$ and repeat all constructions in order two. The iteration of this procedure leads to the concept of a \emph{type of order} $r$.

A type of order $r\geq 1$ is a chain:
$$
\ti=(\psi_0;(\phi_1,\lambda_1,\psi_1);\dots;(\phi_r,\lambda_r,\psi_r)),
$$ 
where $\phi_1(x),\dots,\phi_r(x)\in A[x]$ are monic and irreducible in $\hat{A}_\p[x]$, $\lambda_1,\dots,\lambda_r\in \Q_{>0}$, and $\psi_0(y)\in\F_0[y],\dots,\psi_r(y)\in\F_r[y]$ are monic irreducible polynomials over certain fields $\F_0\subset \dots\subset \F_r$ that satisfy the following recursive properties:
\begin{enumerate}
\item $R_0(\phi_1)(y)=\psi_0(y)$. We define $\F_1:=\F_0(y)/(\psi_0(y))$.
\item For all $1\leq i<r$,
\begin{itemize}
\item	$\deg \phi_i|\deg \phi_{i+1}$,
\item $N_i{(\phi_{i+1})}:=N_{v_{i-1},\phi_i}(\phi_{i+1})$ is one-sided of slope $-\lambda_i$, and
\item $R_i(\phi_{i+1})(y):=R_{v_{i-1},\phi_i,\lambda_i}(\phi_{i+1})(y)=\psi_i(y)$.
\end{itemize}
We define $\F_{i+1}:=\F_{i}[y]/(\psi_i(y))$.

\item $\psi_r(y)\neq y$. We define $\F_{r+1}:=\F_{r}[y]/(\psi_r(y))$.
\end{enumerate}
Thus, a type of order $r$ is an object structured in r levels. In the computational representation of a type, several invariants are stored at each level, $1\leq i\leq r$. The most important ones are:
\begin{center}
\begin{tabular}{ll}
$\phi_i(x)$,			&monic polynomial in $A[x]$, irreducible in $\hat{A}_\p[x]$,\\
$m_i$,				&$\deg\phi_i(x)$,\\
$V_i:=v_{i-1}(\phi_i)$,		&nonnegative integer,\\
$\lambda_i=h_i/e_i$,	&$h_i,e_i$ positive coprime integers\\
$\psi_i(y)$,			&monic irreducible polynomial in $\F_i[y]$,\\
$f_i$,				&$\deg \psi_i(y)$,\\
$z_i$,				&the class of $y$ in $\F_{i+1}$, so that $\psi_i(z_i)=0$ and $\F_{i+1}=\F_i[z_i]$.

\end{tabular}
\end{center}

\noindent Take $f_0:=\deg \psi_0$. Note that 
\begin{align}\label{mifi}
m_i=(f_0f_1\cdots f_{i-1})(e_1\cdots e_{i-1})=e_{i-1}f_{i-1}m_{i-1},\quad \dim_{\F_0}\F_{i+1}=f_0f_1\cdots f_{i}.
\end{align}
The discrete valuations $v_0,\dots,v_{r}$ on the field $K_\p(x)$ are essential invariants of the type.
\begin{definition}\label{gcomplete}
Let $g(x)\in \hat{A}_\p[x]$ be a monic polynomial, and $\ti$ a type of order $r\geq 1$.
\begin{enumerate}
\item We say that $\ti$ \emph{divides} $g(x)$, if $\psi_r(y)$ divides $R_r(g)(y)$ in $\F_r[y]$. We denote $\ord_\ti(g):=\ord_{\psi_r}(R_r(g))$
\item We say that $\ti$ is $g$-\emph{complete} if $\ord_\ti(g)=1$. In this case, $\ti$ singles out a monic irreducible factor $g_{\ti}(x)\in \hat{A}_\p[x]$ of $g(x)$, uniquely determined by the property $R_{r}(g_{\ti})(y)=\psi_r(y)$. If $K_\ti$ is the extension field of $K_\p$ determined by $g_{\ti}(x)$, then
$$
e(K_{\ti}/K_\p)=e_1\cdots e_r,\quad f(K_{\ti}/K_\p)=f_0f_1\cdots f_r. \label{ramindex}
$$
\item A \emph{representative} of $\ti$ is a monic polynomial $\phi_{r+1}(x)\in A[x]$, of degree $m_{r+1}=e_rf_rm_r$ such that $R_{r}(\phi_{r+1})(y)=\psi_r(y)$. This polynomial is necessarily irreducible in $\hat{A}_\p[x]$. By definition of a type, each $\phi_{i+1}$ is a representative of the truncated type of order $i$
$$
\tr_i(\ti):=(\psi_0;(\phi_1,\lambda_1,\psi_1);\dots;(\phi_i,\lambda_i,\psi_i)).
$$

\item We say that $\ti$ is optimal if $m_1<\cdots<m_r$, or equivalently, if $e_if_i>1$, for all $1\leq i<r$.
\end{enumerate}
A type $\ti$ of order $0$ is by definition optimal. 
\end{definition}

\subsection{The Montes algorithm}
For given $f(x)$ and an uniformizer $\pi$ of $\p$, the Montes algorithm determines a family $\ti_1,\dots,\ti_\kappa$ of $f$-complete and optimal types, which correspond uniquely to the irreducible factors of $f_{\ti_1},\dots,f_{\ti_\kappa}$ of $f$ in $\hat{A}_\p[x]$. This correspondence is determined by
\begin{align*}
\ord_{\ti_i}(f_{\ti_j})=\begin{cases}
0, &\text{ if }i\neq j,\\
1, &\text{ if }i= j.
\end{cases}
\end{align*}

Initially the algorithm computes the order zero types determined by the irreducible factors of $f(x)$ modulo $\p$, and then enlarges them successively in an adequate way until the whole list of $f$-complete optimal types is obtained. 

Every output $\ti$ of the Montes algorithm is a type of order $r+1$, where $r$ is called the \emph{Okutsu depth} of the corresponding irreducible factor $f_\ti(x)$. The sequence $[\phi_1,\dots,\phi_{r}]$ is an \emph{Okutsu frame} of $f_\ti(x)$. Details can be found in \cite{OIN}.

The invariants $v_{i},h_i,e_i,f_i$ at each level $0\leq i\leq r$ are canonical (depend only on $f(x)$). The $(r+1)$-level $\ti$ carries only the invariants:
$$
\phi_{r+1},m_{r+1},V_{r+1},\lambda_{r+1}=-h_{r+1},e_{r+1}=1,\psi_{r+1},f_{r+1}=1.
$$
If $\P$ is the prime ideal corresponding to $\ti$, we denote 
\begin{align*}
&f_\P(x):=f_\ti(x)\in \hat{A}_\p[x],\quad \phi_\P(x):=\phi_{r+1}(x)\in A[x],\\
&n_\P:=m_{r+1}=\deg f_\P=\deg \phi_\P,\\
&\ti_\P:=\ti=(\psi_0;(\phi_1,\lambda_1,\psi_1);\dots;(\phi_r,\lambda_r,\psi_r);(\phi_\P,\lambda_{r+1},\psi_{r+1})).
\end{align*}
\begin{definition}\label{omrep}
We say that $\ti_\P$ is an OM \emph{representation} of $\P$.
\end{definition}

\begin{algorithm}
\caption{: Montes algorithm}
\label{AlgoMontes}
\begin{algorithmic}[1]

\REQUIRE A monic irreducible separable polynomial $f\in A[x]$ and a uniformizer $\pi$ of a prime ideal $\p$ of $A$.
\ENSURE A family $\ti_1,\dots,\ti_s$ of $f$-complete and optimal types, parameterizing the monic irreducible factors $f_{\P_1}(x),\dots,f_{\P_s}(x)$ of $f$ in $\hat{A}_\p[x]$.\\[0.25 cm]

\end{algorithmic}
\end{algorithm}

\subsection{Okutsu approximations}\label{OkApp}

Denote $\P$ a prime ideal of $\oo$ over $\p$ and let
$$
\ti_\P=(\psi_0;(\phi_1,\lambda_1,\psi_1);\dots;(\phi_r,\lambda_r,\psi_r);(\phi_\P,\lambda_{r+1},\psi_{r+1}))
$$
be an OM representation of $\p$. The polynomial $\phi_\P(x)$ is an \emph{Okutsu approximation} to the irreducible factor $f_\P(x):=f_{\ti_\P}(x)$ \cite[Sect. 4.1]{OIN}. The value $\lambda_{r+1}=h_{r+1}$ is not a canonical invariant of $f_{\P}$. It measures how close is $\phi_\P$ to $f_\P$; we have $\phi_\P=f_{\P}$ if and only if $h_{r+1}=\infty$. 

In order to determine a $\p$-integral basis our algorithm (cf. Algorithm \ref{AlgopINT}) requires the computation of an Okutsu approximation $\phi_\P$ with sufficiently large value $\lambda_{r+1}=h_{r+1}$. This can be achieved by applying the \emph{single-factor lifting} algorithm of \cite{GNP}, which improves the Okutsu approximation to $f_\P$ with quadratic convergence; that is, doubling the value of $h_{r+1}$ at each iteration. By \cite[p. 744]{NewC} it holds
$$
v_\P(\phi_\P(\t))=V_{r+1}+\lambda_{r+1},
$$
where $V_{r+1}=e_rf_r(e_rV_r+h_r)$ (cf. \cite[p. 141]{BNS}) is an invariant of the type $\ti_\P$. By \cite[Proposition 4.7]{NewC} the value $v_{\mathfrak{P}'}(\phi_\P(\t))$ is given by a closed formula in terms of the data attached to the types $\ti_\P,\ \ti_{\mathfrak{P}'}$, for any prime ideal $\mathfrak{P}'$ of $\oo$ lying over $\p$ different from $\P$. Hence, the single-factor lifting algorithm can produce an element $\phi_\P(\t)$ in $L$ with arbitrary large valuation at $\P$ and constant value $v_{\mathfrak{P}'}(\phi_\P(\t))$, for $\mathfrak{P}'|p$ with $\mathfrak{P}'\neq \P$.

\begin{algorithm}\label{Algo2}
\caption{: Single-factor lifting}\label{SFL}
\begin{algorithmic}[1]

\REQUIRE An OM representation $\ti_\P$ of a prime ideal $\P$ with an Okutsu approximation $\phi_\P$ and $h\in \Z$.
\ENSURE An Okutsu approximation $\phi'_\P$ with $v_\P(\phi'_\P(\t))\geq V_{r+1}+h$.\\[0.25 cm]

\end{algorithmic}
\end{algorithm}

\subsection{Divisor polynomials}\label{divPol}

The notion of \emph{divisor polynomials} is due to Okutsu \cite{Oku}. These polynomials will play a fundamental role in the context of the computation of a $\p$-integral bases. The results are extracted from \cite{OIN}. A comprehensive explanation and proofs can be found there.

For any prime ideal $\P$ lying over $\p$, we consider the data $\ti_\P,\ \phi_\P$ obtained by the Montes algorithm. Additionally, we choose a root $\t_\P$ in $\overline{ K}_\p$ of $f_\P$ and consider the local field $L_\P:=K_\p(\t_\P)$. In particular, $L_\P$ is an extension of $K_\p$ of degree $n_\P=\deg f_\P$. 
As before, we denote by $v_\p$ the discrete valuation on $K$ induced by the prime ideal $\p$ of $A$ and denote by $\hat{v}$ its canonical extension to an algebraic closure of $K_\p$. Recall that $L=K(\t)$, where $\t$ is a root of $f(x)$. Consider the topological embedding $\iota_\P:L\hookrightarrow L_\P$, determined by $\t\mapsto \t_{\P}$. Let $v_0$ be defined as in (\ref{v0}).

\begin{proposition}\label{propdivpol}
For any integer $0\leq m<n_\P$, there exists a monic polynomial $g_m(x)\in \hat{A}_\p[x]$ of degree $m$ such that
$$
\hat{v}(g_m(\t_\P))\geq \hat{v}(g(\t_\P))-v_0(g(x)),
$$
for all polynomials $g(x)\in \hat{A}_\p[x]$ having degree $m$.
\end{proposition}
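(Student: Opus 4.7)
\medskip\noindent
\textbf{Plan.} The strategy is to build $g_m$ explicitly out of the Okutsu frame $[\phi_1,\dots,\phi_r]$ associated with $\ti_\P$, and then use the chain of discrete valuations $v_0,\dots,v_r$ of the type to verify the maximality statement.

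First, using the recursion $m_{i+1}=e_if_im_i$ from \eqref{mifi} together with $m_1=\deg\phi_1$, I would observe that every integer $0\le m<n_\P=m_{r+1}$ admits a unique ``mixed-radix'' expansion
$$
m=j_0+j_1 m_1+j_2 m_2+\cdots+j_r m_r,\qquad 0\le j_0<m_1,\ \ 0\le j_i<e_if_i\ \ (1\le i\le r).
$$
Using this, I would \emph{define}
$$
g_m(x):=x^{j_0}\phi_1(x)^{j_1}\phi_2(x)^{j_2}\cdots\phi_r(x)^{j_r}\in \hat A_\p[x],
$$
which is monic of degree exactly $m$ and lies in $\hat A_\p[x]$ because each $\phi_i$ does.

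Next, for the maximality statement, I would compare $\hat v(g(\t_\P))$ with the values produced by the valuations $v_i$ attached to the type. The key inequality, which I would prove by induction on the order $i$, is that for every nonzero $g\in \hat A_\p[x]$ with $\deg g<m_{i+1}$ one has
$$
v_i(g)\ \le\ e_1e_2\cdots e_i\,\bigl(\hat v(g(\t_\P))-v_0(g)\bigr)+\text{(canonical constant depending only on the type and on }\deg g),
$$
with equality when $g$ is the divisor polynomial constructed above. The inductive step uses the fact that $N_i(\phi_{i+1})$ is one-sided of slope $-\lambda_i$: this forces the $\phi_i$-adic development of any $g$ of degree $<m_{i+1}$ to have its ``best'' point on (or above) the line of slope $-\lambda_i$ through the chosen monomial, and the product $x^{j_0}\phi_1^{j_1}\cdots\phi_i^{j_i}$ is precisely the monic polynomial of degree $j_0+j_1m_1+\cdots+j_i m_i$ that sits on this line. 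Specializing to $i=r$ and unwinding the normalization gives the inequality
$$
\hat v(g(\t_\P))-v_0(g)\ \le\ \hat v(g_m(\t_\P))
$$
required by the proposition (note that $g_m$ is monic in $\hat A_\p[x]$, hence $v_0(g_m)=0$).

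The main obstacle is the inductive propagation of the valuation bound through the levels of the type: one needs to keep careful book of how $v_i$ relates to $\hat v(\cdot(\t_\P))$ via the Newton polygon operator $N_i$ and the residual polynomial operator $R_i$, and in particular to check that the ``maximizing'' point on $N_i(g)$ is attained precisely by the monomial $\phi_i^{j_i}$ when the mixed-radix digit of $m$ at level $i$ is $j_i$. Once this is established, all other ingredients (the uniqueness of the $m$-expansion, the monicity of $g_m$, and the fact that $g_m\in\hat A_\p[x]$) are formal. The full technical development of these valuation identities is carried out in the Okutsu--Montes framework of \cite{OIN}, which I would cite for the step-by-step estimates rather than reproduce here.
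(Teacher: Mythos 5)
The paper does not actually prove Proposition \ref{propdivpol}: it is quoted from the Okutsu--Montes literature, with the explicit construction you propose appearing verbatim as Theorem \ref{divisorpoltheo} (namely $g_m=\prod_{i=0}^r\phi_i^{c_i}$ with $\phi_0=x$ and the same mixed-radix expansion of $m$), and all proofs deferred to \cite{OIN} and \cite{Oku}. Your plan therefore follows exactly the route of the paper's source: the expansion of $m$, the product of the $\phi$-polynomials from the Okutsu frame, and the observation that $g_m$ is monic so $v_0(g_m)=0$ are all correct. The one substantive step --- the inequality $\hat{v}(g(\t_\P))-v_0(g)\le\hat{v}(g_m(\t_\P))$ for an arbitrary $g$ of degree $m$ --- you state only in a vague form (an inductive bound on $v_i(g)$ up to a ``canonical constant'') and then defer to \cite{OIN}; this is precisely what the paper itself does, so your write-up is no less complete than the text you are matching, but as a self-contained argument it leaves the essential valuation estimate (how the $\phi_i$-adic developments interact with the one-sided polygons $N_i(\phi_{i+1})$ to force the maximum onto the monomial $\phi_0^{j_0}\cdots\phi_r^{j_r}$) unproven.
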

Note that the valuation condition from the last proposition does not depend on the choice of the root $\t_\P$ of $f_\P$.

\begin{definition}
We call $g_m(x)$ a \emph{divisor polynomial} of degree $m$ of $f_\P$.
\end{definition}

\begin{lemma}\label{divisorpolred}
Let $0\leq i<j<n_\P$ and $g_i(x), g_j(x)$ two divisor polynomials of $f_\P$. Then,
$$
\hat{v}(g_j(\t_\P))\geq \hat{v}(g_i(\t_\P)).
$$
\end{lemma}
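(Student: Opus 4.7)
The plan is to apply the defining property of $g_j$ from Proposition \ref{propdivpol} to a carefully chosen test polynomial of degree $j$ that is built from $g_i$. The natural candidate is $g(x) := x^{j-i}g_i(x)$, which has degree $j$ and lies in $\hat{A}_\p[x]$ because $g_i$ does.

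First I would compute $v_0(g)$. Since $g_i$ is monic and has coefficients in $\hat{A}_\p$, its leading coefficient is $1$ and all other coefficients have nonnegative $v_\p$-valuation, so by the definition of $v_0$ in (\ref{v0}) we have $v_0(g_i) = 0$. Multiplication by $x^{j-i}$ only shifts the coefficient sequence, hence $v_0(g) = v_0(g_i) = 0$.

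Next I would plug $g$ into the inequality characterizing $g_j$ in Proposition \ref{propdivpol}:
$$
\hat{v}(g_j(\t_\P)) \;\geq\; \hat{v}(g(\t_\P)) - v_0(g) \;=\; \hat{v}\bigl(\t_\P^{\,j-i} g_i(\t_\P)\bigr) \;=\; (j-i)\,\hat{v}(\t_\P) + \hat{v}(g_i(\t_\P)).
$$
To conclude, it remains to observe that $\hat{v}(\t_\P) \geq 0$: indeed $f_\P \in \hat{A}_\p[x]$ is monic, so $\t_\P$ is integral over $\hat{A}_\p$, hence its valuation is nonnegative. Since $j - i > 0$, the term $(j-i)\,\hat{v}(\t_\P)$ is nonnegative and the lemma follows.

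I do not anticipate a real obstacle here: the only subtle point is to check that $g$ genuinely satisfies the hypotheses of Proposition \ref{propdivpol} (monic, in $\hat{A}_\p[x]$, of the correct degree) and that $v_0$ on $g$ is unchanged by the monomial factor $x^{j-i}$; both are immediate from the Gauss-valuation definition of $v_0$ and the fact that $g_i$ is monic with integral coefficients.
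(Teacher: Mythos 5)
Your proof is correct and follows essentially the same route as the paper: both apply Proposition \ref{propdivpol} to the test polynomial $x^{j-i}g_i(x)$ and use $\hat{v}(\t_\P)\geq 0$ to discard the extra term. You merely spell out the computation of $v_0(x^{j-i}g_i)=0$ and the integrality of $\t_\P$, which the paper leaves implicit.
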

\begin{proof}
Since $x^{j-i}g_i(x)\in \hat{A}_\p[x]$ is monic and has degree equal $j$, the last proposition shows that $
\hat{v}(g_j(\t_\P))\geq \hat{v}(\t_\P^{j-i}g_i(\t_\P))\geq \hat{v}(g_i(\t_\P)).
$
\end{proof}

Let $\ti_\P$ be an OM representation of the prime ideal $\P$ lying over $\p$, with $\phi$-polynomials $\phi_1,\dots,\phi_r$. We fix $\phi_0:=x$. Recall that $m_i=\deg \phi_i$, for $0\leq i\leq r$.

\begin{theorem}\label{divisorpoltheo}
For $0<m<n_\P$, we write uniquely 
$$
m=\sum_{i=0}^rc_im_i,\quad 0\leq c_i<\frac{m_{i+1}}{m_i}.
$$
Then, $g_m(x):=\prod_{i=0}^r\phi_i(x)^{c_i}$ is a divisor polynomial of degree $m$ of $f_\P$.
\end{theorem}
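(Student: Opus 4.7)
The plan has two stages. First I would check that $g_m(x)$ is a well-defined monic polynomial in $A[x]$ of degree exactly $m$. From (\ref{mifi}) together with $m_0=1$ one has $m_{i+1}/m_i = e_if_i$, so the constraint $0\le c_i < m_{i+1}/m_i$ is precisely what makes $m=\sum_{i=0}^rc_im_i$ the unique mixed-radix expansion of $m$ in bases $e_0f_0,e_1f_1,\dots,e_rf_r$; this expansion exists and is unique because $m<n_\P=m_{r+1}$. Consequently $g_m=\prod_{i=0}^r\phi_i^{c_i}\in A[x]$ is monic of degree $\sum_{i=0}^r c_im_i=m$.

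Second, I would prove the valuation inequality of Proposition~\ref{propdivpol}, namely
$$
\hat v(g_m(\t_\P))\;\ge\; \hat v(g(\t_\P))-v_0(g)
$$
for every monic $g\in \hat A_\p[x]$ of degree $m$. The argument rests on the tower of discrete valuations $v_0,v_1,\dots,v_r$ carried by the OM representation of $\P$. The key input from Okutsu--Montes theory is that each value $\hat v(\phi_i(\t_\P))$ is a closed-form type invariant (the generalisation, at level $i$, of the identity $v_\P(\phi_\P(\t))=V_{r+1}+\lambda_{r+1}$ recorded in Section~\ref{OkApp}), and that for every $h\in K_\p[x]$ one has the lower bound $(e_1\cdots e_r)\,\hat v(h(\t_\P))\ge v_r(h)$, with equality whenever the slope $-\lambda_r$ is realised at a single vertex of $N_r(h)$. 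Multiplicativity immediately gives $\hat v(g_m(\t_\P))=\sum_{i=0}^r c_i\,\hat v(\phi_i(\t_\P))$, and a direct computation matches this to $v_r(g_m)/(e_1\cdots e_r)$.

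For an arbitrary monic $g$ of degree $m$ I would prove the upper bound on $\hat v(g(\t_\P))$ by a descending $\phi$-adic analysis. The $\phi_r$-adic development $g=\sum_{s}a_s(x)\phi_r^s$ has highest index $s\le\lfloor m/m_r\rfloor = c_r$, with the leading coefficient $a_{c_r}$ monic of degree $m-c_rm_r$. Computing $N_r(g)$ via $(v_{r-1},\phi_r)$ and measuring heights with the slope $-\lambda_r$ yields an upper bound on $v_r(g)$ that is additive over the factorisation $\phi_r^{c_r}\cdot a_{c_r}(x)$. Iterating the argument on $a_{c_r}$ with respect to $(v_{r-2},\phi_{r-1})$, and so on down to $\phi_0=x$, I reach a constant term whose $v_0$-value is at most $v_0(g)$. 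Assembling the chain of inequalities and dividing by $e_1\cdots e_r$ gives $\hat v(g(\t_\P))\le \hat v(g_m(\t_\P))+v_0(g)$, which is the desired bound.

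The main obstacle is precisely the bookkeeping in this multi-level descent: at each level I have to check that the vertex of the current Newton polygon at abscissa $c_i$ really is the one that controls the slope-$(-\lambda_i)$ evaluation, so that the induction step is lossless. This is where the digit condition $0\le c_i<e_if_i$ does essential work — it guarantees that the abscissa $c_i$ lies strictly below the first ``forced vertex'' coming from $\phi_{i+1}$ — and where the fact that $\phi_i$ is a representative of $\mathrm{Trunc}_{i-1}(\ti_\P)$ (so that $N_{i-1}(\phi_i)$ has the prescribed one-sided shape of slope $-\lambda_{i-1}$) is used. Everything else is routine algebraic manipulation of the type invariants $V_i,h_i,e_i,f_i$.
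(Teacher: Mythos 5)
First, a remark on scope: the paper does not prove Theorem \ref{divisorpoltheo} at all --- Subsection \ref{divPol} explicitly defers all proofs to \cite{OIN} --- so your attempt can only be measured against the standard Okutsu--Montes argument in that reference. Your first stage is correct: by (\ref{mifi}) one has $m_{i+1}/m_i=e_if_i$, the expansion of $m$ is the unique mixed-radix one, and $g_m$ is monic of degree $m$. Your overall plan (descend through the $\phi_i$-adic expansions, exploit the digit bounds $c_i<e_if_i$) is also the right shape.

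The genuine gap is the step that converts Newton-polygon information into an \emph{upper} bound on $\hat{v}(g(\t_\P))$ for an arbitrary competitor $g$. The inequality you invoke, $(e_1\cdots e_r)\,\hat{v}(h(\t_\P))\ge v_r(h)$, is a \emph{lower} bound on $\hat{v}(h(\t_\P))$; hence the upper bound on $v_r(g)$ produced by your descent (which is correct: the lower-order terms of the $\phi_r$-adic expansion only add points to the cloud and can only push the polygon down, so $v_r(g)\le v_r(a_{c_r})+c_rv_r(\phi_r)$) says nothing about $\hat{v}(g(\t_\P))$ unless the lower bound is upgraded to an equality \emph{for the competitor $g$ itself}. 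Your stated equality criterion --- that $S_{\lambda_r}(N_r(h))$ is a single vertex --- is only a sufficient condition and does not apply here: for a monic $g$ of degree $m$ with $c_r\ge e_r$ the $\lambda_r$-component can be a genuine side. The criterion actually needed is that equality holds precisely when $\psi_r\nmid R_r(g)$, and the reason this is automatic for \emph{every} monic $g$ of degree $m<n_\P$ is the degree count $\deg R_r(g)\le\lfloor c_r/e_r\rfloor\le f_r-1<f_r=\deg\psi_r$ together with $R_r(g)\ne0$. This is where the digit condition $c_r<e_rf_r$ really does its work --- through $f_r$ and the residual polynomial --- and not through the abscissa $c_i$ ``lying below a forced vertex,'' which is not a mechanism in the proof. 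The same residual-polynomial step must be repeated at each level of the descent, and a final (routine) reduction is needed because Proposition \ref{propdivpol} quantifies over all $g\in\hat{A}_\p[x]$ of degree $m$, not only monic ones. As written, your argument shows that $g_m$ maximizes $v_r$ among monic degree-$m$ polynomials, which is strictly weaker than the claim that it maximizes $\hat{v}(\,\cdot\,(\t_\P))$.
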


\begin{definition}\label{divpolset}
For a prime ideal $\P$ of $\oo$ we define 
$$
\bb_\P:=\{g_0(x),g_1(x),\dots, g_{n_\P-1}(x)\}
$$
\end{definition}
\noindent with $g_0(x)=1$.

Note that, for a prime ideal $\P$, the set $\bb_\P$ is a subset of $A[x]$. The set of all $g_m(\t_\P)/\pi^{\lfloor\hat{v}(g_m(\t_\P)) \rfloor}$, for $0\leq m<n_\P$, is an $\hat{A}_\p$-basis of the integral closure of $\hat{A}_\p$ in the finite extension $K_\p(\t_\P)$. This basis is called the \emph{Okutsu basis} of $\P$.

\section{Reduced bases}\label{Sec2}

In order to describe our algorithm for the computation of a $\p$-integral basis we use the notion of reduced bases which was introduced by W.M.Schmidt \cite{Sch} in the context of algebraic function fields.

By weakening the concept of reduceness we obtain the notion of semi-reduceness and deduce a new characterization of $\p$-integral bases (cf. Theorem \ref{newdesofint}). Recall that $L=K(\t)$ is the finite extension of $K$ determined by the monic separable and irreducible polynomial $f$.

Let $v_\p$ be the discrete valuation determined by the non-zero prime ideal $\p$ and $\pi\in \p$ a uniformizer. We define a $v_\p$-compatible norm.

\begin{definition}
A $\p$-\emph{norm} on $L$ is a mapping $w:L \rightarrow \Q\cup\{\infty\}$ that satisfies:
\begin{enumerate}
\item $w(x+y) \geq \min\{w(x),w(y)\}$, $\forall x,y\in L$, and equality holds if $w(x)\not=w(y)$,
\item $w(ax) =v_\p (a) +w(x) $, for all $a\in K$ and $x\in L$, and
\item $w(x) =\infty$ if and only if $x=0$.

\end{enumerate}
\end{definition}
In other words, a $\p$-norm is an extension of the valuation $v_\p $ to the finite extension $L$ having all properties of a valuation except for the good behavior with respect to multiplication. 

\begin{definition} \label{taured}
Let $w $ be a $\p$-norm. The set $\bb\subset L$ is called $w $-reduced if 
\begin{align}\label{defRed1}
w\Big(\sum_{b\in \bb}\lambda_b b\Big) =\min_{b\in \bb}\{w(\lambda_b b) \}
\end{align}
for all $\lambda_b\in K$. If we have additionally $0\leq w( b) < 1$, for all $b\in\bb$, then we call $\bb$ $w $-orthonormal.

If we weaken condition \textup{(\ref{defRed1})} to 
$$
\Big\lfloor w\Big(\sum_{b\in \bb}\lambda_b b\Big) \Big\rfloor=\min_{b\in \bb}\{\lfloor w(\lambda_b b) \rfloor\},
$$
we call $\bb$ $w $-semi-reduced or $w $-semi-orthonormal, respectively.
\end{definition}
Equivalently, $\bb$ is $w$-reduced if and only if (\ref{defRed1}) holds for coefficients $\lambda_b\in A$ not all of them divisible by $\p$.

If the $\p$-norm $w $ is fixed we just say (semi-) reduced or (semi-) orthonormal, respectively.

Let $\bb = \{b_1,\dots,b_m\}$ be a $w$-reduced set. Then, for any $a_1,\dots,a_m\in K^*$, the set $\{a_1b_1,\dots,a_mb_m\}$ is $w$-reduced.

Clearly, any (semi-) reduced set $\bb\subset L$ can be normalized to a (semi-) orthonormal set $\{\pi^{m_b}b\mid b\in \bb\}$, where $m_b:=-\lfloor w(b) \rfloor$.
\subsection{$\P$-reduceness}
In this section we introduce the notion of $\P$-reduceness, which can be seen as a ``local" concept of reduceness. Let $\P$ be a prime ideal of $\oo$ lying over $\p$ with ramification index $e:=e(\P/\p)$. We denote by $v_\P$ the discrete valuation which is induced by $\P$. Then, the discrete valuation 
$$
w_\P:L\rightarrow  \Q\cup \{\infty\},\quad  w_\P:=\frac{v_\P(z)}{e}
$$
becomes a $\p$-norm on $L$.

Clearly, $w_\P$ has a better behavior with respect to multiplications. In fact, we have 
$w_\P(ab)=w_\P(a)+w_\P(b)$, for all $a,b\in L$. Hence, we obtain the following statement.
\begin{lemma}\label{multstillred}
Let $\bb\subset L$ be a $w_\P$-reduced set and $c\in L^*$. Then, $c\bb$ is $w_\P$-reduced.
\end{lemma}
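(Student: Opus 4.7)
The plan is to exploit the one extra property of $w_\P$ compared with a generic $\p$-norm, namely that it really is a discrete valuation on $L$, so $w_\P(xy)=w_\P(x)+w_\P(y)$ for all $x,y\in L^*$. This multiplicativity is exactly what is pointed out immediately before the lemma, and it is the only ingredient I expect to need.

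Concretely, fix an arbitrary family of coefficients $\lambda_b\in K$ indexed by $b\in\bb$, and compute
\begin{equation*}
w_\P\Bigl(\sum_{b\in\bb}\lambda_b(cb)\Bigr)
= w_\P\Bigl(c\sum_{b\in\bb}\lambda_b b\Bigr)
= w_\P(c)+w_\P\Bigl(\sum_{b\in\bb}\lambda_b b\Bigr),
\end{equation*}
using $K$-linearity to pull $c$ out and then multiplicativity of $w_\P$. Since $\bb$ is $w_\P$-reduced, the second summand equals $\min_{b\in\bb}\{w_\P(\lambda_b b)\}$. Adding the constant $w_\P(c)$ and pushing it back into the minimum gives
\begin{equation*}
w_\P(c)+\min_{b\in\bb}\{w_\P(\lambda_b b)\}
=\min_{b\in\bb}\{w_\P(c)+w_\P(\lambda_b b)\}
=\min_{b\in\bb}\{w_\P(\lambda_b(cb))\},
\end{equation*}
which is exactly the reducedness condition for $c\bb$.

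There is really no obstacle here: the content of the lemma is that multiplication by a nonzero scalar $c\in L^*$ shifts every $w_\P$-value by the same amount $w_\P(c)$, so the minimum on the right and the value on the left are shifted by the same constant and the equation survives. The only thing worth being mildly careful about is that $c\in L^*$ (not just $K^*$) is allowed: this is exactly why one needs $w_\P$ to be multiplicative on all of $L$, not merely $v_\p$-compatible in the $K$-direction as in the generic $\p$-norm definition. Finally, if desired, one can note that the sets $\bb$ and $c\bb$ are in bijection via $b\mapsto cb$, so reindexing the sum by elements of $c\bb$ causes no issue.
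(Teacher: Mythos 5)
Your proof is correct and follows exactly the route the paper intends: the lemma is stated as an immediate consequence of the multiplicativity $w_\P(ab)=w_\P(a)+w_\P(b)$ noted just before it, and your computation of pulling $c$ out of the sum and shifting the minimum by $w_\P(c)$ is precisely that argument spelled out.
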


In order to derive an adequate reduceness criterion for the concept of ``local" reduceness, we are going to consider local fields. Therefore, we consider the completion of the field $L$ at the prime ideal $\P$ (details can be found in \cite{NK}). 

Denote by $L_\P$ the completion of $L$ at the prime ideal $\P$. Regarding the notation from Subsection \ref{divPol}, for $\P|\p$, we can realize the completion $L_\P$ as
\begin{align}\label{completiondef}
L_\P=K_\p(\t_\P),
\end{align}
where $K_\p$ is the completion of $K$ at $\p$, and $\t_\P$ denotes a root of $f_\P$, the irreducible factor of $f$ in $\hat{A}_\p[x]$ corresponding to $\P$.

The valuation $v_\p$ extends in a unique way to a non-discrete valuation 
$$
\hat{v}:\overline{K}_\p \rightarrow \Q\quad.
$$ 
Note that $\hat{v}(\iota_\P(z))=w_\P(z)$, for $z\in L$, where $\iota_\P$ denotes the injection of $L$ into $L_\P$ determined by $\t\mapsto\t_\P$.
In particular, it holds 
\begin{align}\label{valequ}
w_\P(g(\t))=\hat{v}(g(\t_\P))=\hat{v}(\res(g,f_\P))/n_\P,\quad  \text{ for all }g(x)\in A[x].
\end{align}
We denote by $\hat{\oo}_\P\subset L_\P$ the valuation ring of the restriction of $\hat{v}$ to $L_\P$ and set $\m_\P:=\{z\in \hat{\oo}_\P\mid \hat{v}(z)>0  \}$ the maximal ideal of $\hat{\oo}_\P$.

The next lemma will play a fundamental role in the subsequent description of the computation of $\p$-integral bases. Any nonzero prime ideal $\P$ of $\oo$ determines a set $\bb_\P$ of divisor polynomials (cf. Definition \ref{divpolset}). 
For a subset $\bb$ of $A[x]$ denote $\bb(\theta):=\{g(\t)\mid g\in \bb\}$.

\begin{lemma}\label{localbasered}
The set $\bb_\P(\t)$ is $w_\P$-reduced.
\end{lemma}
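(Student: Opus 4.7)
The plan is to pass to the local field via the embedding $\iota_\P : L\hookrightarrow L_\P$, under which $w_\P = \hat v\circ\iota_\P$, and then induct on the largest index $M$ with $\lambda_M\ne 0$ in a candidate expansion $z=\sum_i\lambda_i g_i(\t)$. The linchpin is a simple structural observation about the divisor polynomials: since each $g_i$ is monic of degree $i$, the polynomial $g(x):=\sum_{i\le M}\lambda_i g_i(x)\in K_\p[x]$ has degree exactly $M$ with leading coefficient $\lambda_M$, hence $v_0(g)\le v_\p(\lambda_M)$.

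For the induction, split $z$ as $\lambda_M g_M(\t_\P)+h(\t_\P)$ with $h=\sum_{i<M}\lambda_i g_i$, and apply the inductive hypothesis to $h$. Ultrametric additivity handles the case when $\hat v(\lambda_M g_M(\t_\P))\ne \hat v(h(\t_\P))$ immediately. The only delicate case is when both summands share the same valuation $\mu$; then $\hat v(z)\ge\mu$ is automatic and it remains to exclude $\hat v(z)>\mu$.

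To exclude $\hat v(z)>\mu$, apply Proposition \ref{propdivpol} to $g$ (extended from $\hat A_\p[x]$ to $K_\p[x]$ by scaling with an appropriate power of $\pi$): one obtains $\hat v(g_M(\t_\P))\ge \hat v(z)-v_0(g)>\mu-v_0(g)$. Combining with the identity $\hat v(g_M(\t_\P))=\mu-v_\p(\lambda_M)$ (which is just the assumption $\hat v(\lambda_M g_M(\t_\P))=\mu$) forces $v_0(g)>v_\p(\lambda_M)$, contradicting the linchpin inequality. This closes the induction and establishes the lemma. The main obstacle is recognizing that the single leading-coefficient inequality $v_0(g)\le v_\p(\lambda_M)$ is exactly the tool needed to kill the ambiguous case; the rest is bookkeeping between $w_\P$ on $L$ and $\hat v$ on $L_\P$, and neither Lemma \ref{divisorpolred} nor the Okutsu basis decomposition has to be invoked.
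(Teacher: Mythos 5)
Your proof is correct. It rests on the same key fact as the paper's — Proposition \ref{propdivpol} applied to the linear combination $g=\sum_{i\le M}\lambda_i g_i$, compared against the top divisor polynomial $g_M$ — but the surrounding bookkeeping is genuinely different and, in fact, tighter. The paper argues by contradiction: it first discards all summands whose valuation is not minimal, then invokes Lemma \ref{divisorpolred} (monotonicity of $\hat{v}(g_i(\t_\P))$ in $i$) to conclude that the coefficient valuations decrease with the index, so that dividing by the top coefficient yields a \emph{monic} polynomial with coefficients in $A_\p$ to which the divisor-polynomial property can be applied. You instead exploit the $-v_0(g)$ normalization already built into Proposition \ref{propdivpol}, together with the elementary observation that $\lambda_M$ is literally the leading coefficient of $g$, hence $v_0(g)\le v_\p(\lambda_M)$; this removes any need for Lemma \ref{divisorpolred} or for the reduction to equal-valuation summands, at the modest price of an induction on the top index $M$ (the induction is genuinely needed, since Proposition \ref{propdivpol} only controls $g_M$ and not the lower-index terms where the minimum may be attained). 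A side benefit: your version cleanly covers the case where the top-degree divisor polynomial $g_{n_\P-1}$ is not among the minimal-valuation summands, a point the paper's proof glosses over when it normalizes by $\lambda_{n_\P-1}^{-1}$.
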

  
\begin{proof}
Clearly, $w:=w_\P=e^{-1}v_\P$ is a discrete valuation, and $w(g(\t))=\hat{v}(g(\t_\P))$ for all $g\in A[x]$ by (\ref{valequ}). Suppose that $\bb_\P(\t)=\{1,g_1(\t),\dots, g_{n_\P-1}(\t)\}$ is not $w$-reduced. Let $\lambda_0,\dots,\lambda_{n_\P-1}\in A$ with 
\begin{align}\label{inlem414}
w\Big(\sum_{i=0}^{n_\P-1}\lambda_ig_i(\t)\Big)>\min_{0\leq i< n_\P}\{w(\lambda_ig_i(\t))\}. 
\end{align}
By the strict triangle inequality we only have to consider all summands on the left hand side of (\ref{inlem414}), which have the same (minimal) $w$-value. In other words, we can assume that all summands on the left hand side of (\ref{inlem414}) have the same norm. According to Lemma \ref{divisorpolred} it holds $w(g_i(\t))\geq w(g_j(\t))$, for all $0\leq j<i<n_\P$. Since all summands in (\ref{inlem414}) have the same norm, we have $v_\p(\lambda_j)\geq v_\p(\lambda_i)$, for all $0\leq j<i<n_\P$. Hence, $g(x):=\lambda_{n_\P-1}^{-1}\sum_{i=0}^{n_\P-1}\lambda_ig_i(\t)$ is a monic polynomial of degree $n_{\P}-1$ with coefficients in $A_\p$ satisfying:
$$
w(g(\t))>  \min_{0\leq i< n_\P}\{w(\lambda_{n_\P-1}^{-1}\lambda_ig_i(\t))\}= w(g_{n_\P-1}(\t)),
$$
which is a contradiction, as $g_{n_\P-1}$ is a divisor polynomial of $f_\P$ (cf. Proposition \ref{propdivpol}).

\end{proof}

Henceforth we consider the $\p$-norm $w:=w_\P$. We are interested in a criterion to check wether a set $\bb\subset L$ is $w$-reduced or not and consider therefor a kind of reduction map. Let $\pi_\P$ be a prime element of the prime ideal $\P$ (i.e. $v_\P(\pi_\P)=1$). For any $r\in\Q$ the sets 
$$
L_{\ge r}:=\{z\in L\mid w(z)\ge r\}\supset L_{>r}:=\{z\in L\mid w(z)> r\}
$$
are $A_\p$-submodules of $L$. Their quotient is a $k_\p$-vector space $V_r:=L_{\ge r}/L_{>r}$, where $k_\p:=A/\p$ is the residue field of $\p$. For $r\notin w(L)$ it holds $V_r=0$, whereas for $r\in w(L)$ there is a non-canonical isomorphism $V_r\cong k_\P$, which we are going to describe. Suppose that $r\in w(L)$; that is, $er\in \Z$. Consider the division with remainder 
$$
er=qe+m,\quad 0\leq m<e.
$$
For $z\in L$ with $w(z)\geq r$, a reduction map is given by
\begin{align*}
\red^r_\P(z):L_{\ge r}\rightarrow k_\P,\quad z\mapsto z\pi^{-q} \pi_\P^{-m} +\P,
\end{align*}
The map $\red^r_\P$ induces a $k_\p$-linear isomorphism between $V_r$ and $k_\P$ and vanishes on $L_{>r}$.

\begin{theorem}\label{localPredcri}
A set $\bb\subset L$ is $w$-reduced if and only if for any $\rho\in \mathcal{R}:=\{w(b) + \Z\mid b\in\bb\}$ the vectors in
$$
\{\red^{w(b)}_{\P}(b)\mid b \in \bb \text{ with }w(b)+\Z=\rho\}
$$
 are $k_\p$-linearly independent.
\end{theorem}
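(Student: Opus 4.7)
The plan is to prove both implications by reducing to a single coset $\rho\in\mathcal{R}$ and exploiting the strict triangle inequality. Since $v_\p(\lambda_b)\in\Z$, we have $w(\lambda_b b)+\Z=w(b)+\Z$, so the regrouping
\[
\sum_b\lambda_b b=\sum_{\rho\in\mathcal{R}}\Bigl(\sum_{b\,:\,w(b)+\Z=\rho}\lambda_b b\Bigr)
\]
has partial sums whose $w$-values lie in pairwise distinct cosets of $\Z$ in $\Q$. The strict triangle inequality then forces $w$ of the total to equal the minimum of the $w$'s of these partial sums, so the identity (\ref{defRed1}) for $\bb$ splits into the analogous identity within each coset. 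For a fixed coset $\rho_0$, writing $er_0=q_0 e+m_0$ with $0\le m_0<e$ for any representative $r_0\in\rho_0$, every $b$ with $w(b)\in\rho_0$ satisfies $ew(b)=q_b e+m_0$ for a unique $q_b\in\Z$, so that $\red_\P^{w(b)}(b)=b\,\pi^{-q_b}\pi_\P^{-m_0}\bmod\P$.

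To prove $(\Rightarrow)$ by contraposition, suppose that in some coset $\rho_0$ a non-trivial $k_\p$-linear dependence $\sum_{b\in\bb_0}\bar\mu_b\,\red_\P^{w(b)}(b)=0$ holds in $k_\P$, where $\bb_0:=\{b\in\bb\mid w(b)+\Z=\rho_0\}$. Lift each $\bar\mu_b$ to $\mu_b\in A$ by taking $\mu_b=0$ when $\bar\mu_b=0$ and $v_\p(\mu_b)=0$ otherwise, and set $\lambda_b:=\mu_b\pi^{-q_b}$ for $b\in\bb_0$ and $\lambda_b:=0$ elsewhere. Then every nonzero $\lambda_b b$ has $w(\lambda_b b)=m_0/e$, so $\min_b w(\lambda_b b)=m_0/e$, while
\[
\pi_\P^{-m_0}\sum_b\lambda_b b=\sum_{b\in\bb_0}\mu_b\bigl(b\,\pi^{-q_b}\pi_\P^{-m_0}\bigr)
\]
lies in $\hat\oo_\P$ and reduces modulo $\P$ to $\sum_b\bar\mu_b\,\red_\P^{w(b)}(b)=0$. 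Hence $w\bigl(\sum_b\lambda_b b\bigr)>m_0/e$, violating reduceness.

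For $(\Leftarrow)$, take any family $(\lambda_b)_{b\in\bb}$ in $K$, not all zero. By the coset reduction above, it suffices to assume $\lambda_b$ is supported on a single coset $\rho_0$; set $r_0:=\min_b w(\lambda_b b)$ and $er_0=q_0 e+m_0$. Writing $\lambda_b=u_b\pi^{v_\p(\lambda_b)}$ with $u_b$ a $\p$-unit, the element $z:=\pi^{-q_0}\pi_\P^{-m_0}\sum_b\lambda_b b$ is a sum of terms of $w$-value $w(\lambda_b b)-r_0\ge 0$, so $z\in\hat\oo_\P$. Terms with $w(\lambda_b b)>r_0$ vanish modulo $\P$, whereas terms with $w(\lambda_b b)=r_0$ contribute $\bar u_b\cdot\red_\P^{w(b)}(b)$ with $\bar u_b\in k_\p^\times$. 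By the linear-independence hypothesis, the resulting sum in $k_\P$ is nonzero, so $w(z)=0$ and $w\bigl(\sum_b\lambda_b b\bigr)=r_0=\min_b w(\lambda_b b)$.

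The only genuine technical point is checking that the integer $m_0$ appearing in $\red_\P^{w(b)}(b)$ depends only on the coset $w(b)+\Z$; this is what permits factoring $\pi_\P^{-m_0}$ uniformly across the summands indexed by $\bb_0$ and matches the reduction of their sum with the $k_\p$-linear combination of their individual reductions. Apart from this bookkeeping, the argument is a direct valuation-theoretic computation.
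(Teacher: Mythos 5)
Your proof is correct and follows essentially the same route as the paper's: reduce to a single coset $\rho$ of $\Z$ via the strict triangle inequality, then translate reducedness within a coset into non-vanishing of the corresponding $k_\p$-linear combination of reductions in $k_\P$. The only difference is cosmetic — you track the integers $q_b$ explicitly instead of normalizing all elements to a common norm by powers of $\pi$, and you spell out the step the paper dismisses as ``trivially''.
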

\begin{proof}

For $\rho\in \mathcal{R}$, we set $\bb_\rho:=\{b\in \bb\mid w(b)+\Z = \rho \}$. We use the following claim in order to prove the statement.\medskip\\
\underline{\textbf{Claim}:}\smallskip\\
The set $\bb$ is $w$-reduced if and only if $\bb_\rho$ is $w$-reduced, for all $\rho\in \mathcal{R}$. \smallskip\\

By the claim we can assume that all vectors $b\in\bb$ have the same norm $\rho$ modulo $\Z$. Moreover, we may assume that all vectors $b\in\bb$ have the same norm, by replacing each $b\in \bb$ by $\pi^mb$ for an adequate choice of $m\in \Z$. Let us denote by $r:=w(b)$ this common norm.

Let $\bb=\{b_1,\dots,b_n\}$. We may consider $h_1,\dots,h_n\in A$ not all divisible by $\p$, so that $\min_{1\leq i\leq n}\{v_\p (h_ib_i)\}=r$. Then, trivially, $w(z)=r$ iff $\red^r_\P(z)\neq 0$ iff $\sum_{i=1}^n\overline{h}_i\red^r_\P(b_i)\neq 0$, where $\overline{h}\in k_\p$ indicates reduction modulo $\p$.

We prove the claim. Since any subset of a reduced family is reduced, we only need to show that $\bb$ is reduced if all $\bb_\rho$ are reduced.

Let $I:=\{\rho\in\Q/\Z\mid \bb_\rho\ne\emptyset\}$.
We have $E:=\langle \bb \rangle_K=\bigoplus_{\rho \in I}E_\rho$, where $E_\rho$ is the subspace of $E$ generated by $\bb_\rho$.
Take $a_1,\dots,a_n\in K$ and let $x=\sum_{i=1}^n a_ib_i$. This element splits as $x=\sum_{\rho\in I} x_\rho$, where $x_\rho=\sum_{b_i\in \bb_\rho}a_ib_i$. 
Since all values $w(x_\rho)$ are different (because $w(a_ib_i)\equiv w(b_i)\bmod{\Z}$),
we have $w(x)=\min_{\rho\in I}\{w(x_\rho)\}$. On the other hand, since all $\bb_\rho$ are reduced, we have $w(x_\rho)=\min_{b_i\in \bb_\rho}\{w(a_ib_i)\}$. Thus, $\bb$ is reduced.

\end{proof}
In order to obtain an analogous criterion to test if a subset of $L$ is $w$-semi-reduced we introduce another kind of reduction map. To this end, we consider $\P$-adic expansions of elements in $ L_\P$. Let $\oo_\P$ bee valuation ring of $v_\P$ and fix a system of representatives $R$ of the residue class field $k_\P:=\oo_\P/\P\cong\hat{\oo}_\P/\m_\P$ of $\P$. By \cite[Satz 4.4]{NK} any nonzero element $z$ in $L_\P$ has a unique representation $z=\pi_\P^m(\lambda_0+\lambda_1\pi_\P+\lambda_2\pi_\P^2+\cdots)$, where $\lambda_i\in R$ and $m=v_\P(z)\in\Z$. In particular, for any $z\in L^*$ we can write
\begin{align*}
\iota_\P(z)=\sum_{j=v_\P(z)}^\infty \lambda_j \pi_\P^j,\quad  \lambda_j\in R.
\end{align*}

\begin{definition}
For $r\in \Q$ and $z\in L_{\geq \lfloor r\rfloor}$, we have $v_\P(z\pi^{-\lfloor r\rfloor})=v_\P(z)-e\lfloor r\rfloor\geq 0$. We write $\iota_\P(z\pi^{-\lfloor r\rfloor})=\lambda_0+\lambda_1\pi_\P+\cdots+\lambda_{e-1}\pi_\P^{e-1}+\cdots$, and define 
$$
\sred^r_\P(z):=
(\overline{\lambda}_{0},\dots, \overline{\lambda}_{e-1})\in k_\P^{e},
$$ 
where $\overline{\lambda}\in k_\P$ is reduction modulo $\P$ of $\lambda$.
\end{definition}
Clearly, $\sred^r_\P$ induces a $k$-linear mapping $V_{\lfloor r\rfloor}\rightarrow k_\P^e$.

\begin{theorem}\label{semiredcriterio}
A set $\bb\subset L$ is $w$-semi-reduced if and only if the vectors in
$$
\{\sred^{w(b)}_\P(b)\mid b \in \bb \}
$$
are $k_\p$-linearly independent.
\end{theorem}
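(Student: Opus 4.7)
The plan is to mirror the proof of Theorem \ref{localPredcri}, replacing $\red$ by $\sred$ and keeping track of the floor. I would first isolate three properties of the map $\sred^M_\P$ (for $M \in \Z$):
(i) $\sred^M_\P(z) = 0$ iff $\lfloor w(z)\rfloor \geq M+1$, directly from the definition;
(ii) $\sred^M_\P$ is $k_\p$-linear, as it factors through the $k_\p$-vector space $L_{\geq M}/L_{\geq M+1}$;
(iii) $\sred^{m+s}_\P(\pi^s b) = \sred^{m}_\P(b)$ for every $b \in L_{\geq m}$ and every $s\in\Z$, because $\iota_\P(\pi^s b \cdot \pi^{-(m+s)}) = \iota_\P(b\pi^{-m})$.

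For $(\Leftarrow)$, given a finite $A$-linear combination $\sum_b\lambda_b b$ with $\lambda_b = u_b\pi^{s_b}$, $u_b\in A_\p^*$ on the support, set $M := \min_b(s_b + \lfloor w(b)\rfloor)$ and $S := \{b : s_b + \lfloor w(b)\rfloor = M\}$. For $b\notin S$, property (i) kills the summand; for $b\in S$, properties (ii) and (iii) give $\sred^M_\P(\lambda_b b) = \bar u_b\, \sred^{w(b)}_\P(b)$, so
$$
\sred^M_\P\Bigl(\sum_b \lambda_b b\Bigr) = \sum_{b\in S} \bar u_b\, \sred^{w(b)}_\P(b).
$$
With each $\bar u_b \in k_\p^*$, the hypothesis of $k_\p$-linear independence makes this sum nonzero, and (i) yields $\lfloor w(\sum_b \lambda_b b)\rfloor = M$, i.e., the semi-reducedness identity. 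Conversely, a nontrivial relation $\sum_b \bar c_b \sred^{w(b)}_\P(b) = 0$ with support $S$ and lifts $c_b\in A$ of $v_\p$-valuation $0$ can be aligned by choosing $M\geq \max_{b\in S}\lfloor w(b)\rfloor$ and setting $\lambda_b := c_b\pi^{M - \lfloor w(b)\rfloor}$ on $S$; then $\min_b\lfloor w(\lambda_b b)\rfloor = M$ but the same computation gives $\sred^M_\P(\sum_b \lambda_b b) = 0$, forcing $\lfloor w(\sum_b \lambda_b b)\rfloor > M$ by (i), contradicting semi-reducedness.

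The main technical point I would handle carefully is property (ii). The concrete definition of $\sred^M_\P$ via the $\pi_\P$-expansion has a potential subtlety with ``carries'' when a sum or product of $R$-representatives leaves the set $R$. The resolution is to pick $R$ so as to contain a lift of $k_\p \subset k_\P$ inside $\hat A_\p$; then any $u \in A_\p^*$ with $\bar u \in k_\p$ satisfies $u - \tilde u \in \p\hat A_\p \subset \m_\P^e$ (where $\tilde u\in R\cap \hat A_\p$ lifts $\bar u$), so multiplication by $u$ on $\hat \oo_\P/\m_\P^e$ acts componentwise on the $\pi_\P$-basis. More abstractly, $L_{\geq M}/L_{\geq M+1}$ is intrinsically a $k_\p$-vector space of dimension $ef$, and the $\pi_\P$-expansion furnishes a $k_\p$-linear bijection onto $k_\P^e$, so (ii) is automatic.
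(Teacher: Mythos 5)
Your proof is correct and is essentially the paper's: the paper's entire proof of this theorem is the instruction to rerun the argument of Theorem \ref{localPredcri} with $\lfloor w\rfloor$ in place of $w$ and $\sred^r_\P$ in place of $\red^r_\P$, and your properties (i)--(iii) together with the two directions are exactly that adaptation, with the decomposition into classes $\rho\in\Q/\Z$ rightly collapsed since $\lfloor w\rfloor$ is integer-valued. The one point where you go beyond the paper is the linearity in (ii): the paper dismisses this with ``Clearly, $\sred^r_\P$ induces a $k$-linear mapping'', but as you observe it genuinely requires the system of representatives $R$ to be compatible with addition and $k_\p$-multiplication modulo $\m_\P^{e}$ (for instance $R$ taken inside the maximal unramified subextension of $L_\P$, which also covers additivity of the higher digits, not only the scalar action your concrete choice addresses), since for an arbitrary $R$ the digit map $\hat{\oo}_\P/\m_\P^{e}\to k_\P^{e}$ can fail to be additive because of carries.
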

\begin{proof}
The statement can be proven by considering the proof of Theorem \ref{localPredcri} and replacing $w$ by $\lfloor w\rfloor$ and $\red^{r}_\P$ by $\sred^{r}_\P$, respectively.
\end{proof}

\subsection{$\p$-reduceness}

The concept of $\P$-reduceness can be generalized to several prime ideals $\P_1,\dots,\P_s$. Henceforth denote by $S=\{\P_1,\dots,\P_s\}$ the set of all prime ideals lying over $\p$. The set $S$ induces a mapping
\begin{align}\label{ws}
w_S:L\rightarrow \Q\cup\{\infty\}, \quad w_S(z):=\min_{1\leq i\leq s}\{w_{\P_i}(z)\}
\end{align}

An immediate consequence of this definition is the following observation.

\begin{lemma}
The map $w_S$ is a $\p$-norm on $L$.
\end{lemma}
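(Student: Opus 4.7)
The plan is to verify the three defining axioms of a $\p$-norm (Definition 2.1 in the excerpt) directly for $w_S=\min_i w_{\P_i}$, exploiting that each $w_{\P_i}$ is already a $\p$-norm (indeed a discrete valuation) on $L$.

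I would dispatch the two easy axioms first. For non-degeneracy (axiom (3)): each $w_{\P_i}(z)$ is $\infty$ iff $z=0$, so the minimum over the finite set $S$ is $\infty$ iff $z=0$. For $K$-scaling (axiom (2)): since $\P_i$ lies over $\p$, the restriction of $v_{\P_i}$ to $K$ equals $e(\P_i/\p)\,v_\p$, so $w_{\P_i}(a)=v_\p(a)$ for $a\in K$. Hence $w_{\P_i}(az)=v_\p(a)+w_{\P_i}(z)$, and pulling the constant $v_\p(a)$ out of the minimum over $i$ yields $w_S(az)=v_\p(a)+w_S(z)$.

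The only genuinely non-routine point is the ultrametric axiom (1), especially the equality clause. The weak inequality is immediate: for every $i$,
$$
w_{\P_i}(x+y)\ge\min\{w_{\P_i}(x),w_{\P_i}(y)\}\ge \min\{w_S(x),w_S(y)\},
$$
and taking the minimum over $i$ on the left preserves the bound. For the equality statement, suppose $w_S(x)\neq w_S(y)$, say $w_S(x)<w_S(y)$. The obstacle to overcome is that the minima defining $w_S(x)$ and $w_S(y)$ might be attained at different primes, so one cannot immediately invoke the strict triangle equality of a single valuation. The fix is to pick a single index $i_0$ with $w_{\P_{i_0}}(x)=w_S(x)$; then $w_{\P_{i_0}}(y)\ge w_S(y)>w_S(x)=w_{\P_{i_0}}(x)$, and the strict triangle equality for the valuation $w_{\P_{i_0}}$ gives $w_{\P_{i_0}}(x+y)=w_{\P_{i_0}}(x)=w_S(x)$. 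Therefore $w_S(x+y)\le w_{\P_{i_0}}(x+y)=w_S(x)$, and combined with the already-established reverse inequality this yields $w_S(x+y)=w_S(x)=\min\{w_S(x),w_S(y)\}$, as required.

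Once the three axioms are in hand the proof is complete; no further input from the OM-representation machinery or from the divisor polynomials is needed at this stage, since the claim is purely about combining valuations on $L$.
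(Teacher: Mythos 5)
Your proof is correct; the paper states this lemma as an ``immediate consequence'' of the definition and gives no proof, and your direct verification of the three axioms is exactly the argument being left implicit. In particular you correctly identify and handle the one subtle point, the equality clause of the ultrametric axiom, by working at a single prime $\P_{i_0}$ attaining the smaller minimum.
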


As in the last subsection we define ``reduction maps" $\red$ and $\sred$ in order to generalize the reduceness-criterion from Theorem \ref{localPredcri} and the semi-reduceness-criterion of Theorem \ref{semiredcriterio} to this situation. Denote for $1\leq i\leq s$ by $e_i:=e(\P_i/\p)$ the ramification index of $\P_i$ and set $w:=w_S$.

\begin{definition}
For $r\in \Q$ and $z\in L$, we define 
\begin{align*}
\red^r_S(z)&:=(\red^r_{\P_i}(z))_{1\leq i\leq s}\in k_{\P_1}\times \cdots \times k_{\P_s}\text{ and} \\
\sred^r_{S}(z)&:=(\sred^r_{\P_i}(z))_{1\leq i\leq s}\in k_{\P_1}^{e_1}\times \cdots \times k_{\P_s}^{e_s}.
\end{align*}

\end{definition}
 
The following properties are transmitted by the properties of the local mappings $\red^r_{\P_i}$ and $\sred^r_{\P_i}$, for $1\leq i\leq s$, respectively.

\begin{lemma}\label{transmitprop}
The mappings $\red^r_S$ and $\sred^r_S$ induce $k_\p$-linear mappings $V_r\rightarrow k_{\P_1}\times\cdots \times k_{\P_s}$ and $V_{\lfloor r\rfloor}\rightarrow k_{\P_1}^{e_1}\times\cdots\times k_{\P_s}^{e_s}$ and vanish on $L_{>r}$ and $L_{\geq\lfloor r\rfloor+1}$, respectively.
\end{lemma}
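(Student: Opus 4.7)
The plan is to deduce the statement componentwise from the local results already proved for $\red^r_{\P_i}$ and $\sred^r_{\P_i}$ in the previous subsection. The connecting tissue is the elementary identity $w_S=\min_i w_{\P_i}$, which implies, for every $r\in\Q$, that an element $z$ satisfies $w_S(z)\geq r$ (resp.\ $w_S(z)>r$) if and only if $w_{\P_i}(z)\geq r$ (resp.\ $w_{\P_i}(z)>r$) for every $i$. Consequently, $L_{\geq r}$ with respect to $w_S$ is contained in each of the corresponding local submodules, so the product maps $\red^r_S$ and $\sred^r_S$ are well-defined on $L_{\geq r}$ and $L_{\geq\lfloor r\rfloor}$ respectively, merely by evaluating each coordinate.

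For $\red^r_S$, the argument has three steps. First, $k_\p$-linearity is purely componentwise and is inherited from the $k_\p$-linearity of each local $\red^r_{\P_i}$. Second, vanishing on $L_{>r}$ (with respect to $w_S$) follows because this set sits inside each local $L_{>r}$, on which the local map already vanishes (as recorded in the paragraph following the definition of $V_r$). Third, the two steps together mean that $\red^r_S$ descends to a $k_\p$-linear map out of $V_r=L_{\geq r}/L_{>r}$, with image landing in $\prod_i k_{\P_i}$ by construction.

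For $\sred^r_S$ the structure is parallel. Componentwise linearity is immediate. The key local computation is the claimed vanishing on $L_{\geq\lfloor r\rfloor+1}$: if $w_S(z)\geq \lfloor r\rfloor+1$, then for every $i$ one has $v_{\P_i}(z\pi^{-\lfloor r\rfloor})\geq e_i$, so the first $e_i$ digits $\lambda_0,\dots,\lambda_{e_i-1}$ in the $\P_i$-adic expansion of $\iota_{\P_i}(z\pi^{-\lfloor r\rfloor})$ all vanish, giving $\sred^r_{\P_i}(z)=0$ and hence $\sred^r_S(z)=0$. That $\sred^r_S$ factors through $V_{\lfloor r\rfloor}$ is then transported directly from the analogous local inducement of each $\sred^r_{\P_i}$, using once more the containment of the $w_S$-filtration in each $w_{\P_i}$-filtration.

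I expect no genuine obstacle in this lemma: all the substantive work was carried out in the single-prime case, and what remains is essentially bookkeeping. The only point requiring mild care is to keep straight which of the two valuations ($w_S$ vs.\ $w_{\P_i}$) each inequality refers to, and to verify that the containment relations of the filtrations run in the right direction — which they do precisely because $w_S$ is a minimum and therefore produces smaller sublevel sets than any individual $w_{\P_i}$.
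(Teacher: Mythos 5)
Your proof is correct and follows exactly the route the paper intends: the paper offers no written proof beyond the remark that the properties are ``transmitted'' from the local maps $\red^r_{\P_i}$ and $\sred^r_{\P_i}$, and your componentwise verification (using that $w_S=\min_i w_{\P_i}$ makes the $w_S$-sublevel sets sit inside each local one) is precisely that transmission spelled out. Nothing to add.
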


Analogously to Theorem \ref{localPredcri} and Theorem \ref{semiredcriterio} one can prove the following statements.
 
\begin{theorem}\label{redcrigen}
A set $\bb\subset L$ is $w$-reduced if and only if for any $\rho\in\mathcal{R}:=\{w(b) + \Z\mid b\in\bb\}$ the vectors in
$$
\{\red^{w(b)}_S(b)\mid b \in \bb \text{ with }w(b)+\Z=\rho\}
$$
are $k_\p$-linearly independent.
\end{theorem}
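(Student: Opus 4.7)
The plan is to mimic the proof of Theorem \ref{localPredcri} step by step, replacing the discrete valuation $w_\P$ by the $\p$-norm $w := w_S$ and the single-target reduction map $\red^r_\P$ by the product-valued map $\red^r_S$. Two structural ingredients make the transport possible: $w$ satisfies the strict triangle inequality by Definition \ref{taured}(1), and by Lemma \ref{transmitprop} the map $\red^r_S$ induces a $k_\p$-linear map $V_r \to k_{\P_1} \times \cdots \times k_{\P_s}$ that vanishes on $L_{>r}$.

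First I would establish the same decomposition claim as in Theorem \ref{localPredcri}: $\bb$ is $w$-reduced iff, for every $\rho \in \mathcal{R}$, the subset $\bb_\rho := \{b \in \bb \mid w(b) + \Z = \rho\}$ is $w$-reduced. The forward direction is trivial. For the converse, I split a $K$-linear combination as $\sum_\rho x_\rho$ with $x_\rho := \sum_{b \in \bb_\rho} \lambda_b b$; because $w(\lambda_b b) = v_\p(\lambda_b) + w(b) \equiv w(b) \pmod{\Z}$, the value $w(x_\rho)$ lies in the coset $\rho$, so the $w(x_\rho)$ are pairwise distinct as $\rho$ varies. The strict triangle inequality then forces $w(\sum_\rho x_\rho) = \min_\rho w(x_\rho)$, and reduceness inside each $\bb_\rho$ delivers reduceness of $\bb$.

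Having disposed of the claim, I fix a single coset $\rho$ and may assume every $b \in \bb$ has $w(b) \in \rho$; multiplying each $b$ by an appropriate power of $\pi$ (which only rescales the coordinates of $\red^{w(b)}_S(b)$ by units of $k_\p$ and preserves reduceness) further reduces to the case that all $b$ share one common norm $r$. Then $\bb = \{b_1, \ldots, b_n\}$ is $w$-reduced iff, for every $h_1, \ldots, h_n \in A$ not all in $\p$, one has $w(\sum_i h_i b_i) = r$. The final link is the equivalence, for $z \in L_{\geq r}$,
\[ w(z) > r \iff w_{\P_j}(z) > r \text{ for every } j \iff \red^r_{\P_j}(z) = 0 \text{ for every } j \iff \red^r_S(z) = 0. \]
Applied to $z = \sum_i h_i b_i$, reduceness becomes $\sum_i \bar h_i \red^r_S(b_i) \neq 0$ in $k_{\P_1} \times \cdots \times k_{\P_s}$ whenever the $\bar h_i \in k_\p$ are not all zero, which is exactly $k_\p$-linear independence of the vectors $\red^r_S(b_i)$.

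The main subtlety is the first equivalence above: if some $\P_j$ satisfies $r \notin w_{\P_j}(L) = \frac{1}{e_j}\Z$, then $w_{\P_j}$ never takes the value $r$ on $L$, so the $\P_j$-component of $\red^r_S$ is automatically zero on $L_{\geq r}$. This is entirely consistent with the displayed equivalence (the missing coordinate simply contributes nothing) and does not obstruct the argument.
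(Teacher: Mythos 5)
Your proof is correct and is exactly the argument the paper intends: the paper offers no separate proof of this theorem, stating only that it follows analogously to Theorem \ref{localPredcri}, and your write-up carries out that analogy faithfully (the coset decomposition claim, normalization to a common value $r$, and the equivalence $w_S(z)>r \Leftrightarrow \red^r_S(z)=0$ for $z\in L_{\geq r}$). Your closing remark about the coordinates $\P_j$ with $r\notin w_{\P_j}(L)$ correctly identifies, and resolves, the only point where the several-prime case genuinely differs from the local one.
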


\begin{theorem}\label{semiredcriglobal}
A set $\bb\subset L$ is $w$-semi-reduced if and only if the vectors in
$$
\{\sred^{w(b)}_S(b)\mid b \in \bb \}
$$
are $k_\p$-linearly independent.
\end{theorem}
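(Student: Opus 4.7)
The plan is to follow the scheme of Theorem \ref{semiredcriterio}, using the $k_\p$-linearity of the global map $\sred^r_S$ provided by Lemma \ref{transmitprop}: $\sred^r_S$ is defined on $L_{\geq \lfloor r\rfloor}$, vanishes on $L_{\geq \lfloor r\rfloor + 1}$, and for $z\in L_{\geq \lfloor r\rfloor}$ satisfies $\sred^r_S(z)\neq 0$ iff $\lfloor w(z)\rfloor = \lfloor r\rfloor$. In contrast with Theorem \ref{redcrigen}, no splitting of $\bb$ by residue class of $w(b)\bmod \Z$ is needed, because taking floors commutes with finite minima.

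The technical heart is the following scaling identity: given $b\in\bb$ and $\lambda\in K^*$, write $\lambda = \pi^{v_\p(\lambda)}u$ with $u\in A_\p^*$ and set $r := v_\p(\lambda) + \lfloor w(b)\rfloor$; then $r = \lfloor w(\lambda b)\rfloor$ and
\begin{equation*}
\sred^r_S(\lambda b) = \overline{u}\cdot \sred^{w(b)}_S(b),
\end{equation*}
where $\overline u \in k_\p^*$ acts componentwise on $k_{\P_1}^{e_1}\times\cdots\times k_{\P_s}^{e_s}$ through the embeddings $k_\p\hookrightarrow k_{\P_i}$. This reduces to a local check in each completion: $\lambda b\,\pi^{-r} = u\cdot b\,\pi^{-\lfloor w(b)\rfloor}$, and since $u$ is a unit in every $\hat\oo_{\P_i}$, multiplying the $\P_i$-adic expansion of $b\,\pi^{-\lfloor w(b)\rfloor}$ by $u$ and truncating to the first $e_i$ coefficients rescales each coefficient by $\overline u$ modulo $\P_i$.

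For ($\Leftarrow$), assume the vectors $\{\sred^{w(b)}_S(b)\}_{b\in\bb}$ are $k_\p$-linearly independent, take $\lambda_b\in K$, and set $x:=\sum_b \lambda_b b$ and $r:=\min_b \lfloor w(\lambda_b b)\rfloor\in\Z$. Trivially $\lfloor w(x)\rfloor\geq r$. Applying $\sred^r_S$ and using the scaling identity, summands with $\lfloor w(\lambda_b b)\rfloor>r$ contribute zero while the remaining ones contribute $\overline u_b\,\sred^{w(b)}_S(b)$ with $\overline u_b\in k_\p^*$; linear independence forces $\sred^r_S(x)\ne 0$, hence $\lfloor w(x)\rfloor = r$.

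For ($\Rightarrow$), suppose a nontrivial relation $\sum_b \overline c_b\,\sred^{w(b)}_S(b) = 0$ holds. Lift each nonzero $\overline c_b$ to some $c_b\in A\setminus\p$, set $c_b:=0$ otherwise, and put $\lambda_b := \pi^{-\lfloor w(b)\rfloor}c_b$. Each nonzero summand of $x:=\sum_b \lambda_b b$ then satisfies $\lfloor w(\lambda_b b)\rfloor = 0$, while the scaling identity yields $\sred^0_S(x) = \sum_b \overline c_b\,\sred^{w(b)}_S(b) = 0$, forcing $\lfloor w(x)\rfloor\geq 1$ and contradicting $w$-semi-reduceness. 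The main obstacle is the scaling identity itself: one must track how multiplication by a single $\p$-unit $u$ interacts simultaneously with all the $\P_i$-adic expansions and with the diagonal $k_\p$-module structure on the product of local residue-field spaces.
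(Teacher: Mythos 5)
Your proof is correct and follows essentially the route the paper intends: the paper proves this theorem only by declaring it ``analogous'' to Theorems \ref{localPredcri} and \ref{semiredcriterio}, i.e.\ by replacing $w$ with $\lfloor w\rfloor$ and $\red$ with $\sred$, and your write-up is a faithful (and more explicit) execution of that scheme, including the correct observation that no splitting by residue classes modulo $\Z$ is needed once floors are taken. Your ``scaling identity'' is just the $k_\p$-linearity of $\sred^r_S$ asserted (without proof) in Lemma \ref{transmitprop} applied to the unit $u$, so it introduces no gap beyond what the paper itself leaves implicit (namely that the system of representatives $R$ is chosen, e.g.\ with $0\in R$ and inside the unramified part, so that carries in the $\P_i$-adic expansions land in positions $\geq e_i$).
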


 We are interested in the relation between $\P$-reduceness and $S$-reduceness. Recall that $\pi$ is a fixed uniformizer of $\p$.

\begin{theorem}\label{intbase1}
For $1\leq i\leq s$, let $\bb_{i}\subset L$ be $w_{\P_i}$-reduced and $z_i\in L$ such that, for all $b\in\bb_i$,
\begin{align}\label{ineqcon}
 w_{\P_i}(z_i b)<  w_{\P_j}(z_i b),  \text{ for } j \in \{1,\dots,s\}\setminus \{i\}.
\end{align}
Then, $\bigcup_{i=1}^sz_i\bb_i$ is $w$-reduced.
\end{theorem}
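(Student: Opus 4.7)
The plan is to take an arbitrary finite $K$-linear combination of elements of $\bigcup_{i=1}^s z_i\bb_i$, group it by index $i$, and reduce the $w$-reduceness identity to a local statement inside each group plus a separation argument across groups. Writing the combination as $\sum_{i=1}^s s_i$ with $s_i=\sum_{b\in\bb_i}\lambda_{i,b}(z_ib)$ and $\lambda_{i,b}\in K$, the goal is
$$
w\Bigl(\sum_{i=1}^s s_i\Bigr)=\min_{i,b}\{w(\lambda_{i,b}z_ib)\}.
$$

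For each $i$, Lemma \ref{multstillred} applied to $z_i\in L^*$ shows that $z_i\bb_i$ is $w_{\P_i}$-reduced, so $w_{\P_i}(s_i)=\min_b\{w_{\P_i}(\lambda_{i,b}z_ib)\}$. Next, every $w_{\P_j}$ extends $v_\p$, so the hypothesis (\ref{ineqcon}) survives multiplication by any $\lambda\in K^*$: $w_{\P_i}(\lambda z_ib)<w_{\P_j}(\lambda z_ib)$ for $j\neq i$, and in particular $w(\lambda z_ib)=w_{\P_i}(\lambda z_ib)$. Passing these strict inequalities through the finite sum defining $s_i$ via the ordinary triangle inequality of the valuations $w_{\P_j}$, I obtain
$$
w(s_i)=w_{\P_i}(s_i)=\min_b\{w(\lambda_{i,b}z_ib)\}\quad\text{and}\quad w_{\P_j}(s_i)>w(s_i)\text{ for every }j\neq i.
$$

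The main step is to show $w\bigl(\sum_i s_i\bigr)=\min_i\{w(s_i)\}$; composing with the formula for $w(s_i)$ just obtained yields the theorem. Set $\rho:=\min_i w(s_i)$ and $I_0:=\{i\mid w(s_i)=\rho\}$. The bound ``$\ge$'' is routine: each $w_{\P_j}$ is a valuation, so $w_{\P_j}\bigl(\sum_i s_i\bigr)\ge\min_i w_{\P_j}(s_i)\ge\min_i w(s_i)=\rho$, and taking the minimum over $j$ gives $w\bigl(\sum_i s_i\bigr)\ge\rho$. For the reverse inequality, pick any $i\in I_0$ and evaluate $w_{\P_i}$ on the total sum: for $l\neq i$ one has $w_{\P_i}(s_l)>w(s_l)\ge\rho$, while $w_{\P_i}(s_i)=w(s_i)=\rho$, so the strict triangle inequality of the valuation $w_{\P_i}$ forces $w_{\P_i}\bigl(\sum_l s_l\bigr)=\rho$, whence $w\bigl(\sum_l s_l\bigr)\le w_{\P_i}\bigl(\sum_l s_l\bigr)=\rho$.

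The delicate, and essentially the only, point is the case in which several $s_i$ share the minimal value $\rho$: a priori the $s_i$ could cancel and push the valuation of the sum strictly above $\rho$. The separation $w_{\P_j}(s_i)>w(s_i)$ for $j\neq i$, coming straight from (\ref{ineqcon}) and preserved through the linear combinations inside each group, is exactly what prevents this: at every $i\in I_0$ the valuation $w_{\P_i}$ singles out $s_i$ as the unique minimiser on $\sum_l s_l$, so no other $s_l$ can conspire to raise the valuation of the sum above $\rho$.
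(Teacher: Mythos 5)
Your proof is correct, but it follows a genuinely different route from the paper's. The paper proves the theorem by passing through its reduction-map criteria: it observes that (\ref{ineqcon}) forces $\red^{w(z_ib)}_{\P_j}(z_ib)=0$ for $j\neq i$, so that each vector $\red^{w(z_ib)}_S(z_ib)$ is supported only in the $i$-th coordinate, and then invokes Theorem \ref{localPredcri} (local linear independence of the residues of each $z_i\bb_i$) together with Theorem \ref{redcrigen} to conclude. You instead give a direct ultrametric argument: group the linear combination by prime, use Lemma \ref{multstillred} to evaluate $w_{\P_i}(s_i)$, propagate the strict inequalities (\ref{ineqcon}) through each partial sum to get the separation $w_{\P_j}(s_i)>w(s_i)$ for $j\neq i$, and then let $w_{\P_i}$ single out $s_i$ as the unique minimiser in $\sum_l s_l$. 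All the key steps check out --- in particular the inequality $w_{\P_j}(s_i)\geq\min_b w_{\P_j}(\lambda_{i,b}z_ib)>\min_b w_{\P_i}(\lambda_{i,b}z_ib)=w_{\P_i}(s_i)$ is valid because the minimum is over finitely many terms, each strictly above $w_{\P_i}(s_i)$. Your approach is more elementary and self-contained, needing nothing beyond the valuation axioms and Lemma \ref{multstillred}; the paper's approach is shorter given that the residue criteria are already in place, and it runs in exact parallel with the proof of the semi-reduced analogue (Theorem \ref{globalargument}), where a direct argument like yours would be more delicate because $\lfloor w\rfloor$ no longer enjoys the strict triangle equality that your final step relies on.
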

\begin{proof}
We set $\red^r_i:=\red^r_{\P_i}$ and $w_i:=w_{\P_i}$, for $1\leq i\leq s$ and $r\in \Q$. By (\ref{ineqcon}), for $1\leq i\leq s$ and $b\in \bb_i$, it holds $w(z_ib)=w_i(z_ib)<w_j(z_ib)$, for $j \neq i$. Then, we obtain $\red_j^{w(z_ib)}(z_ib)=0\in k_{\P_j}$, for all $j\neq i$, by the definition of $\red_j^r$. Hence, $\red_S^{w(z_ib)}(z_ib)$ is given by
$$
(0,\dots,0,\red_i^{w(z_ib)}(z_ib),0,\dots,0).
$$
By Lemma \ref{multstillred} the sets $z_i\bb_i$ are $w_i$-reduced, for $1\leq i\leq s$, and therefore, for each $\rho\in \Q/\Z$, the elements $\{\red_S^{w(z_ib)}(z_ib)\mid b\in\bb_i,\quad w(b)+\Z=\rho\}$ are $k_\p$-linearly independent by Theorem \ref{localPredcri}. Hence, the set $\bigcup_{i=1}^sz_i\bb_i$ is $w$-reduced by Theorem \ref{redcrigen}.
\end{proof}

\begin{theorem}\label{globalargument}
For $1\leq i\leq s$, let $\bb_{i}\subset L$ be $w_{\P_i}$-reduced and $z_i\in L$ such that for all $b\in\bb_i$
\begin{enumerate}
\item $\lfloor w_{\P_i}(z_i b)\rfloor\leq \lfloor w_{\P_j}(z_i b)\rfloor$ for $1\leq i<j\leq s$
\item $\lfloor w_{\P_i}(z_i b)\rfloor<\lfloor w_{\P_l}(z_i b)\rfloor$ for $1\leq l<i\leq s$.
\end{enumerate}
Then, $\bigcup_{i=1}^sz_i\bb_i$ is $w$-semi-reduced.
\end{theorem}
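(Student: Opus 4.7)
My plan is to apply the semi-reduceness criterion of Theorem~\ref{semiredcriglobal}: it suffices to check that the vectors $\{\sred^{w(b')}_S(b')\mid b'\in\bigcup_{i=1}^s z_i\bb_i\}$ living in $\prod_{i=1}^s k_{\P_i}^{e_i}$ are $k_\p$-linearly independent. Hypotheses (1) and (2) are designed so that these vectors display a block-triangular pattern with respect to the factorization of the codomain, and the ``diagonal'' blocks can be handled by invoking Theorem~\ref{semiredcriterio} on each $z_i\bb_i$ individually.

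First I would record the identity $\lfloor w(z_i b)\rfloor=\min_j\lfloor w_{\P_j}(z_i b)\rfloor=\lfloor w_{\P_i}(z_i b)\rfloor$, which follows from the definition $w=w_S$, the fact that $\lfloor\,\cdot\,\rfloor$ commutes with $\min$, and conditions (1)--(2). Since $\sred^{r}_{\P_j}$ depends only on $\lfloor r\rfloor$ and vanishes precisely when $\lfloor w_{\P_j}(z)\rfloor>\lfloor r\rfloor$, hypothesis (2) immediately forces $\sred^{w(b')}_{\P_l}(b')=0$ for every $b'=z_ib$ and every $l<i$, while the non-strict inequality in (1) leaves the block at position $i$ itself nontrivial: indeed $\sred^{w(b')}_{\P_i}(b')=\sred^{w_{\P_i}(b')}_{\P_i}(b')\neq 0$. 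Thus each vector $\sred^{w(b')}_S(b')$ is supported on the blocks with index $\geq i$.

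Given this triangular structure I would peel off the indices $i=1,2,\dots,s$ in succession. In a hypothetical dependence relation $\sum_{b'} c_{b'}\sred^{w(b')}_S(b')=0$, projection onto the $\P_1$-block annihilates every contribution from $b'\in z_i\bb_i$ with $i\geq 2$, leaving $\sum_{b'\in z_1\bb_1}c_{b'}\sred^{w_{\P_1}(b')}_{\P_1}(b')=0$. Lemma~\ref{multstillred} shows that $z_1\bb_1$ is $w_{\P_1}$-reduced, hence in particular $w_{\P_1}$-semi-reduced, so Theorem~\ref{semiredcriterio} forces $c_{b'}=0$ for all $b'\in z_1\bb_1$; iterating on blocks $\P_2,\dots,\P_s$ successively eliminates the remaining coefficients. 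The main obstacle, although minor, is the careful passage from hypotheses stated on floors to actual vanishing of $\sred$-components: one must argue integer-by-integer, exploiting the insensitivity of $\sred^{r}_{\P_j}$ to the fractional part of $r$. Apart from that, everything parallels the proof of Theorem~\ref{intbase1}, with the strict inequality used there replaced by the weak/strict pair (1)--(2), which produces triangularity rather than diagonality of the matrix of $\sred^{w(b')}_S$-vectors.
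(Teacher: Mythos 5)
Your proposal is correct and follows essentially the same route as the paper: reduce to Theorem~\ref{semiredcriglobal}, use hypothesis (2) and the insensitivity of $\sred^r_{\P_l}$ to the fractional part of $r$ to get the block-triangular shape of the vectors $\sred^{w(z_ib)}_S(z_ib)$, and then combine Lemma~\ref{multstillred} with Theorem~\ref{semiredcriterio} on each diagonal block. Your block-by-block elimination merely makes explicit the linear-independence step that the paper leaves implicit after displaying the triangular form.
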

\begin{proof}
We set $w_i:=w_{\P_i}$ and $\sred_i^r:=\sred_{\P_i}^r$, for $1\leq i\leq s$ and $r\in \Q$. By the hypothesis, for $1\leq l<i\leq s$ and $b\in \bb_i$, we have $\lfloor w(z_ib) \rfloor=\lfloor w_i(z_ib) \rfloor< \lfloor w_l(z_ib) \rfloor$; hence, $\sred_l^{w(z_ib)}(z_ib)=0\in k_{\P_l}^{e_l}$.
In particular, with $r_i:=w(z_ib)$ we deduce, for $1\leq i\leq s$,
\begin{align}
\sred_{S}^{r_i}(z_ib)=(0,\dots,0,\sred_{i}^{r_i}(z_ib),*,\dots,*),\label{2.eqinprof}
\end{align}
with some vectors $*\in k_{\P_j}^{e_j}$, for $j>i$. Since $\lfloor w(z_ib) \rfloor=\lfloor w_i(z_ib) \rfloor$, we have $\sred^{w(z_ib)}_i(z_ib)=\sred_i^{w_i(z_ib)}(z_ib)$, for $b\in \bb_i$. According to Lemma \ref{multstillred} the sets $z_i\bb_i$ are $w_{i}$-reduced, and particularly $w_{i}$-semi-reduced. Then, by Theorem \ref{semiredcriterio} the family $\{\sred_i^{w_i(z_ib)}(z_ib)\mid b\in\bb_i\}$ is $k_\p$-linearly independent. By (\ref{2.eqinprof}), the family $\bigcup_{1\leq i\leq s}\{\sred^{w(z_ib)}_S(z_ib)\mid b\in\bb_i\}$ is $k_\p$-linearly independent. Thus, by Theorem \ref{semiredcriglobal} $\bigcup_{1\leq i\leq s} z_i\bb_i$ is $w$-semi-reduced.

\end{proof}

\section{Computation of $\p$-integral bases}\label{p_intBase}

Let $\t$ be a root of a monic irreducible separable polynomial $f\in A[x]$ of degree $n$ and let $L=K(\t)$ be the finite separable extension of $K$ generated by $\t$. We fix a non-zero prime ideal $\p$ of $ A$ and denote by $A_\p$ the localization of $A$ at $\p$ and set $k_\p=A/\p$. We denote by $\oo$ the integral closure of $A$ in $L$. The goal of this section is to describe an algorithm, which computes a (reduced) $\p$-integral basis of $\oo$.

\begin{lemma-definition}\label{DefpInt}
Let $b_1,\dots,b_n \in \oo$ be $A$-linearly independent elements and denote by $M= \gen{b_1,\dots,b_n}_A$ the $A$-submodule of $\oo$ that they generate. The following conditions are equivalent:
\begin{enumerate}
\item $ b_1,\dots, b_n$ are an $A_\p$-basis of $\oo_\p$.
\item $ b_1,\dots, b_n$ are a $k_\p$-basis of $\oo_\p/\p\oo_\p$.
\end{enumerate}
If these conditions are satisfied we call $ (b_1,\dots, b_n)$ a $\p$-integral basis of $\oo$.\medskip
\end{lemma-definition}
\begin{proof}
The two conditions are equivalent by Nakayama's lemma.
\end{proof}

Denote by $S$ the set of all prime ideals of $\oo$ lying over $\p$ and consider the $\p$-norm $w_S$ defined in (\ref{ws}). In \cite{HN} it was shown that a $w_S$-orthonormal set of $n$ elements in $L$ determines a $\p$-integral basis. The next theorem is an improvement of this result and a new characterization of $\p$-integral bases.

\begin{theorem}\label{newdesofint}
Let $\bb$ be subset of $L$ with $n$ elements. Then, $\bb$ is a $\p$-integral basis of $\oo$ if and only if $\bb$ is $w_S$-semi-orthonormal.
\end{theorem}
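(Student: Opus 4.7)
The plan is to translate $w_S$-semi-orthonormality into module-theoretic statements via criterion (2) of Lemma-Definition \ref{DefpInt}: $\bb$ is a $\p$-integral basis iff the reductions $\{\overline{b}:b\in\bb\}$ form a $k_\p$-basis of $\oo_\p/\p\oo_\p$. The dictionary I will use throughout is the pair of equivalences
\[
w_S(z)\ge 0\ \iff\ z\in\oo_\p,\qquad w_S(z)\ge 1\ \iff\ z\in\p\oo_\p,
\]
where the second follows from $\p\oo=\prod_{i=1}^s\P_i^{e_i}$ together with $w_{\P_i}=v_{\P_i}/e_i$. Thus the normalization $0\le w_S(b)<1$ encodes precisely ``$b\in\oo_\p$ and $\overline{b}\ne 0$.'' A second routine identity I will exploit is $\lfloor w_S(\la b)\rfloor = v_\p(\la)$ whenever $0\le w_S(b)<1$ and $\la\in K^*$, which is immediate from $w_S(\la b)=v_\p(\la)+w_S(b)$ and $v_\p(\la)\in\Z$. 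Finally, since $\oo_\p$ is a free module of rank $n$ over the DVR $A_\p$, the quotient $\oo_\p/\p\oo_\p$ has $k_\p$-dimension exactly $n$.

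For the direction ``$\Leftarrow$'', assume $\bb$ is $w_S$-semi-orthonormal. Each $\overline{b}$ is nonzero by the dictionary. To check $k_\p$-linear independence of these $n$ reductions, I would take $\la_b\in A_\p$ with $\sum_b\la_b b\in\p\oo_\p$, so that $\lfloor w_S(\sum_b\la_b b)\rfloor\ge 1$. Semi-reducedness then gives $\min_b\lfloor w_S(\la_b b)\rfloor\ge 1$, and the floor identity turns this into $v_\p(\la_b)\ge 1$ for every $b$, i.e.\ $\overline{\la}_b=0$ in $k_\p$. Since $|\bb|=n=\dim_{k_\p}\oo_\p/\p\oo_\p$, the reductions form a basis, proving $\bb$ is a $\p$-integral basis.

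For the direction ``$\Rightarrow$'', assume $\bb$ is a $\p$-integral basis. Then $\bb\subset\oo_\p\setminus\p\oo_\p$ forces $0\le w_S(b)<1$ by the dictionary. To verify semi-reducedness, take $\la_b\in K$ not all zero and normalize by a suitable power of $\pi$ so that $\la_b\in A_\p$ with $\min_b v_\p(\la_b)=0$; this scaling shifts both sides of the floor equality by the same integer, so it is harmless. The right-hand side then equals $0$ by the floor identity. On the left, $\sum_b\la_b b\in\oo_\p$ gives $w_S\ge 0$; and because some $\overline{\la}_b$ is nonzero and $\{\overline{b}\}$ is a $k_\p$-basis, the image $\sum_b\overline{\la}_b\overline{b}$ is nonzero in $\oo_\p/\p\oo_\p$, so $\sum_b\la_b b\notin\p\oo_\p$ and $w_S<1$. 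Hence both sides of the floor identity equal $0$.

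I do not expect a genuine obstacle: the argument is a clean dictionary between the $\p$-norm $w_S$ and the $A_\p$-module structure of $\oo_\p$. The one subtlety to present carefully is the role of the floor function, which collapses the fractional $w_S$-data spread over several primes above $\p$ into the integer $v_\p$-data visible modulo $\p\oo_\p$; this is precisely why \emph{semi}-reducedness, rather than full reducedness, is the correct notion to place in correspondence with $\p$-integral bases.
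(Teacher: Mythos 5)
Your proposal is correct. The forward direction (``semi-orthonormal $\Rightarrow$ $\p$-integral basis'') coincides with the paper's argument: the dictionary $w_S(z)\ge 0\iff z\in\oo_\p$ and $w_S(z)\ge 1\iff z\in\p\oo_\p$, followed by pushing a relation $\sum_b\lambda_b b\in\p\oo_\p$ down to $v_\p(\lambda_b)\ge 1$ and invoking Nakayama via Lemma-Definition \ref{DefpInt}. The backward direction is where you genuinely diverge. The paper does not verify the floor identity directly for a given $\p$-integral basis $\bb$; instead it cites the existence of a $w_S$-semi-reduced basis $\bb'$ from \cite{HN}, notes by the first direction that $\bb'$ is itself a $\p$-integral basis, and then transports semi-orthonormality from $\bb'$ to $\bb$ through the transition matrix in $\gl_n(A_\p)$ using Lemma \ref{helpLem}. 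You instead check the definition head-on: after scaling so that $\lambda_b\in A_\p$ with $\min_b v_\p(\lambda_b)=0$ (harmless, since multiplying by $\pi^m$ shifts both sides of the floor identity by the integer $m$), the right-hand side is $0$, while the left-hand side lies in $[0,1)$ because $\sum_b\lambda_b b$ belongs to $\oo_\p$ but, by the basis property of the reductions in $\oo_\p/\p\oo_\p$, not to $\p\oo_\p$. This buys self-containment --- no appeal to the existence theorem of \cite{HN} and no need for Lemma \ref{helpLem} --- and it exposes exactly why the floor, i.e.\ semi-reducedness rather than reducedness, is the right notion; the paper's route, by contrast, reuses machinery (Lemma \ref{helpLem}) that it needs anyway and leans on a known existence result. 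The one step you should make explicit is the identification $\p\oo_\p=\{z\in L\mid w_S(z)\ge 1\}$ coming from $\p\oo=\prod_{i=1}^s\P_i^{e_i}$ and $w_{\P_i}=v_{\P_i}/e_i$, which you indeed state up front.
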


In order to prove Theorem \ref{newdesofint} we will use the following lemma.

\begin{lemma}\label{helpLem}
Let $\bb'=(b_1',\dots,b'_n)$ be a $w$-semi-orthonormal basis, $T\in\gl_n(A_\p)$, and $\bb=(b_1,\dots,b_n)$ determined by $(b_1',\dots,b'_n)T=(b_1,\dots,b_n)$. Then, $\bb$ is a $w$-semi-orthonormal basis.\\
\end{lemma}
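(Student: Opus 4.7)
The plan is to verify the two defining properties of a $w$-semi-orthonormal basis for $\bb$: each $b_j$ should satisfy $0\le w(b_j)<1$, and $\bb$ should be $w$-semi-reduced. The basis property itself will come for free, since $T\in\gl_n(A_\p)$ sends an $A_\p$-basis to an $A_\p$-basis. Writing $T=(t_{ij})$ so that $b_j=\sum_{i}t_{ij}b_i'$, the key ingredients I will use are that all $t_{ij}\in A_\p$ (hence $v_\p(t_{ij})\ge 0$), and that, since $\det T\in A_\p^*$, each column of $T$ contains at least one $\p$-adic unit; otherwise expansion of $\det T$ along that column would force $v_\p(\det T)\ge 1$.

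For the normalization, I would invoke semi-reduceness of $\bb'$ together with $w(b_i')\in[0,1)$ to compute
\[
\lfloor w(b_j)\rfloor=\min_i\lfloor v_\p(t_{ij})+w(b_i')\rfloor=\min_i v_\p(t_{ij})=0,
\]
where the last equality uses the unit-in-every-column observation. This yields $0\le w(b_j)<1$, and in particular $\lfloor w(b_j)\rfloor=0$.

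For semi-reduceness, given arbitrary $\lambda_1,\ldots,\lambda_n\in K$, I would rewrite $\sum_j\lambda_jb_j=\sum_i\mu_ib_i'$ with $\mu_i:=\sum_jt_{ij}\lambda_j\in K$; in matrix form $\vec{\mu}=T\vec{\lambda}$, so $\vec{\lambda}=T^{-1}\vec{\mu}$ with $T^{-1}\in\gl_n(A_\p)$. Semi-reduceness of $\bb'$ combined with $\lfloor w(b_i')\rfloor=0$ then gives $\lfloor w(\sum_j\lambda_jb_j)\rfloor=\min_iv_\p(\mu_i)$, whereas the previous step yields $\min_j\lfloor w(\lambda_jb_j)\rfloor=\min_jv_\p(\lambda_j)$. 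These two quantities agree by a double inequality: the ultrametric inequality applied to $\mu_i=\sum_jt_{ij}\lambda_j$ (with $v_\p(t_{ij})\ge 0$) gives $\min_iv_\p(\mu_i)\ge\min_jv_\p(\lambda_j)$, and the same argument applied to the relation $\vec{\lambda}=T^{-1}\vec{\mu}$ produces the reverse inequality.

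I do not anticipate a serious obstacle. The only subtlety is the column-unit observation, which plays a double role: it yields $\lfloor w(b_j)\rfloor=0$ directly, and, via the symmetric role of $T^{-1}\in\gl_n(A_\p)$, supplies the reverse inequality needed to match $\min_iv_\p(\mu_i)$ with $\min_jv_\p(\lambda_j)$.
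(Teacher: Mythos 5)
Your proof is correct and follows essentially the same route as the paper's: both reduce the lemma to the fact that a matrix in $\gl_n(A_\p)$ preserves the min-valuation $\min_j v_\p(\lambda_j)$ on coefficient vectors, combined with semi-reduceness of $\bb'$ and $0\le w(b_i')<1$. The only cosmetic difference is that you establish this valuation-preservation directly via the two ultrametric inequalities for $T$ and $T^{-1}$ (together with the column-unit observation for the normalization $\lfloor w(b_j)\rfloor=0$), whereas the paper factors $T$ into elementary matrices; both arguments are sound.
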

\begin{proof}
We consider the extension of $v_\p$ to $K^n$:
$$v_\p((a_1,\dots,a_n))=\min_{1\leq i\leq n}\{v_\p(a_i)\}.$$ 
\underline{\textbf{Claim}:}\smallskip\\
A matrix $T=(t_{i,j})\in K^{n\times n}$ belongs to $\gl_n(A_\p)$ if and only if preserves $v_\p$; that is, $v_\p(aT)=v_\p(T)$ for all $a\in K^n$.\\

After the claim, the statement of the lemma yields immediately: For $a=(a_1,\dots,a_n)$, $a'=(a'_1,\dots,a'_n)$ with $a'_i:=\sum_{j=1}^nt_{i,j}a_j$, and for a $w$-semi-orthonormal basis $\bb'$ it holds
\begin{align*}
\Big\lfloor w\Big(\sum_{i=1}^na_ib_i\Big)\Big\rfloor=\Big\lfloor w\Big(\sum_{i=1}^na'_ib'_i\Big)\Big\rfloor&=\min_{1\leq i\leq n}\{v_\p(a'_i)\}=v_\p(aT)=v_\p(a),
\end{align*}
Thus, $\bb$ is $w$-semi-orthonormal.

In order to prove the claim suppose $T$ preserves $v_\p$. For all vectors $e_i$ of the standard basis of $K^n$ we have $v_\p(e_iT)=v_\p(T)=0$, so that all rows of $T$ have entries in $A_\p$. This shows that $T\in A_\p^{n\times n}$. The reduction $\overline{T}\in k_\p^{n\times n}$ acts on $k_\p^n$ and sends non-zero vectors to non-zero vectors. Thus, $\overline{T}$ is invertible and therefore $T\in\gl_n(A_\p)$.

Now, assume $T\in\gl_n(A_\p)$. Then $T$ is a product of elementary matrices. Since elementary matrices preserve $v_\p$, $T$ has the same property.

\end{proof}

\begin{proof}[Proof of Theorem \ref{newdesofint}]
We set $w:=w_S$. Suppose that $\bb$ is $w$-semi-orthonormal. An easy computation shows that $w(z)\geq 0$ if and only if $z\in \oo_\p$; hence, the set $\bb$ is a subset of $\oo_\p$. According to Lemma-Definition \ref{DefpInt} it is sufficient to show that $\bb$ is a set of $k_\p$-linearly independent vectors in order to show that $\bb$ is a $\p$-integral basis of $\oo$.

Assume $\sum_{b\in\bb}\lambda_bb\in \p\oo\otimes_A A_\p$ with $\lambda_b\in A_\p$. Then, $w\big(\sum_{b\in\bb}\lambda_bb\big)\geq 1$. Since $\bb$ is $w$-semi-orthonormal, we deduce $w(\lambda_bb)\geq 1$, for all $b\in\bb$, and therefore $v_{\p}(\lambda_b)\geq 1$, for all $b\in \bb$. That is, $\lambda_b\in \p A_\p$, for $b\in \bb$.

For the other direction let $\bb$ be any $\p$-integral basis of $\oo$. In \cite{HN} it is shown that a (semi-) reduced basis $\bb'$ of $L$ exists. We can assume that $\bb'$ is already normalized to a $w$-semi-orthonormal subset of $\oo$ with $n$ elements. As shown above, the family $\bb'$ is also a $\p$-integral basis of $\oo$; hence, the transition matrix from $\bb$ to $\bb'$ belongs to $\gl_n(A_\p)$. Thus, Lemma \ref{helpLem} states that $\bb$ is $w$-semi-orthonormal too.

\end{proof}
\subsection{The algorithm}

Let $S=\{\P_1,\dots,\P_s\}$ be the set of all prime ideals of $\oo$ lying over $\p$. For $1\leq j\leq s$, we denote by $\Phi_j:=\phi_{\P_j}$ an Okutsu approximation of the $\p$-adic irreducible factor $f_{\P_j}$ of $f$ in $\hat{A}_\p[x]$ (cf. subsection \ref{OkApp}) and let $\bb_j:=\bb_{\P_j}(\t)$, where $\bb_{\P_j}$ is defined in Definition \ref{divpolset}. 

In \cite{NewC} are given closed formulas for the values $v_{\P_j}(\Phi_i(\t))$ for $j\neq i $ in terms of data collected by the Montes algorithm. We recall that  when we improve $\Phi_i$, the value $v_{\P_i}(\Phi_i(\t))$ increases, but the values $v_{\P_j}(\Phi_i(\t))$ for $j\neq i$ remain constant.

We set $n_{\P_j}=\deg f_{\P_j}$, for $1\leq j\leq s$. Note that $n_{\P_j}=e(\P_j/\p)f(\P_j/\p)$, where $f(\P_j/\p)$ denotes the residue degree of $\P_j$ over $\p$. By Lemma \ref{localbasered} the set $\bb_j$ is $w_{\P_j}$-reduced.
By definition it holds $\#\bb_j=\deg f_{\P_j}$, for $1\leq j\leq s$; hence, $\#\bigcup_{\kappa=1}^s\bb_\kappa=\sum_{i=1}^s\deg f_{\P_j}=\deg f=n$. Denote by $\pi$ a uniformizer of $\p$.

By applying Theorems \ref{intbase1}, \ref{globalargument}, and \ref{newdesofint} we obtain the next two statements.

\begin{theorem}\label{INtBasis}
For $1\leq \kappa\leq s$, we set 
\begin{align}\label{multichoice}
z_\kappa:=\prod\limits_{\substack{ j=1\\ j\neq \kappa}}^s \Phi_j^{\epsilon_j}(\t),
\end{align}
where $\epsilon_j\in\{0,1\}$ and the Okutsu approximation $\Phi_j$ are chosen in such a way that, for all $b\in\bb_\kappa$,
\begin{enumerate}
\item $ \lfloor\ww{z_\kappa b}_{{\P_\kappa}}\rfloor\leq  \lfloor\ww{z_\kappa b}_{{\P_i}}\rfloor$, for $1\leq \kappa<i\leq s$,
\item $ \lfloor\ww{z_\kappa b}_{{\P_\kappa}}\rfloor<  \lfloor\ww{z_\kappa b}_{{\P_i}}\rfloor$, for $1\leq i<\kappa\leq s$.
\end{enumerate}
Then, $\{b_1,\dots,b_n\}:=\bigcup_{\kappa=1}^sz_\kappa\bb_\kappa$ is $w_{S}$-semi-reduced. In particular, the family
$$
\frac{b_i}{\pi^{\lfloor w_S(b_i)\rfloor}},\quad 1\leq i\leq n,
$$
is a $\p$-integral basis of $\oo$.
\end{theorem}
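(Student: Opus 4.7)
The plan is that this theorem is essentially a corollary of the machinery already built up, obtained by chaining together Lemma \ref{localbasered}, Theorem \ref{globalargument}, and Theorem \ref{newdesofint}. First I would record that for each $1\le\kappa\le s$ the set $\bb_\kappa=\bb_{\P_\kappa}(\t)$ is $w_{\P_\kappa}$-reduced by Lemma \ref{localbasered}. The hypotheses (1), (2) of the present statement then match verbatim the asymmetric hypotheses of Theorem \ref{globalargument} (strict inequality against the preceding primes, weak inequality against the following ones), with $z_i:=z_\kappa$ and $\bb_i:=\bb_\kappa$. Applying that theorem yields immediately that $\bigcup_{\kappa=1}^{s}z_\kappa\bb_\kappa$ is $w_S$-semi-reduced.

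The second step is the cardinality count. By Definition \ref{divpolset}, $\#\bb_\kappa=n_{\P_\kappa}$, and since the $f_{\P_j}$ are precisely the irreducible factors of $f$ in $\hat{A}_\p[x]$ one has $\sum_{\kappa=1}^{s}n_{\P_\kappa}=\deg f=n$. So the union has \emph{at most} $n$ elements; on the other hand the $w_S$-semi-reducedness just established forces its elements to be $K$-linearly independent (a nontrivial $K$-linear relation would give an element of $w_S$-value $\infty$ equal to the minimum of finite values, contradicting the floor-identity in Definition \ref{taured}). Hence the elements are pairwise distinct and nonzero and the union genuinely has cardinality $n$, so that the indexing $\{b_1,\dots,b_n\}$ is legitimate.

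For the final claim I would invoke Theorem \ref{newdesofint}. Dividing each $b_i$ by $\pi^{\lfloor w_S(b_i)\rfloor}$ produces an element whose $w_S$-value lies in $[0,1)$, and semi-reducedness is preserved under rescaling of each generator by an element of $K^*$ (this is immediate from Definition \ref{taured}, absorbing the rescaling into the coefficients). Hence the rescaled family is $w_S$-semi-orthonormal of cardinality $n$, which by Theorem \ref{newdesofint} is exactly what it means to be a $\p$-integral basis of $\oo$.

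There is no real obstacle beyond bookkeeping: the entire substance of the argument lies in Theorem \ref{globalargument}, which does the nontrivial work of deducing a \emph{global} semi-reduceness statement from local $w_{\P_\kappa}$-reduceness together with a separation of floors of the $w_{\P_j}$-values. The only point requiring care is to check that the multiplier $z_\kappa$ in (\ref{multichoice}) can indeed be arranged to satisfy (1), (2); this is secured by the closed formulas of \cite{NewC} that give the off-diagonal values $v_{\P_j}(\Phi_i(\t))$ for $j\neq i$ in terms of OM data, together with the single-factor lifting algorithm (Algorithm \ref{SFL}), which lets one enlarge the diagonal value $v_{\P_i}(\Phi_i(\t))$ at will while keeping those off-diagonal values constant.
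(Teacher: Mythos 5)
Your proposal is correct and follows exactly the route the paper intends: the paper itself derives Theorem \ref{INtBasis} by combining Lemma \ref{localbasered} (each $\bb_\kappa$ is $w_{\P_\kappa}$-reduced), Theorem \ref{globalargument} (whose hypotheses coincide with conditions (1) and (2)), the count $\sum_\kappa \#\bb_\kappa = \sum_\kappa n_{\P_\kappa} = n$, and Theorem \ref{newdesofint} applied to the normalized, hence $w_S$-semi-orthonormal, family. Your extra remarks on why the union has exactly $n$ elements and on how single-factor lifting secures conditions (1) and (2) are sound and only make explicit what the paper leaves implicit.
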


\begin{theorem}\label{INtBasisred}
If we replace in Theorem \ref{INtBasis} item \textit{1} and \textit{2} by the condition 
$$ 
w_{{\P_\kappa}}(z_\kappa b) <  w_{{\P_i}}(z_\kappa b),  \text{ for } i \in \{1,\dots,s\}\setminus \{\kappa\},
$$
then $(b_i/\pi^{\lfloor w_S(b_i)\rfloor})_{1\leq i\leq n}$ is a $w_S$-orthonormal basis of $\oo$.
\end{theorem}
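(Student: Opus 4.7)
The plan is to mirror the proof strategy of Theorem \ref{INtBasis}, but to invoke Theorem \ref{intbase1} in place of Theorem \ref{globalargument}. The strengthened hypothesis $w_{\P_\kappa}(z_\kappa b) < w_{\P_i}(z_\kappa b)$ for all $i\neq\kappa$ is precisely the condition (\ref{ineqcon}) of Theorem \ref{intbase1}, so no ordering of the primes $\P_1,\dots,\P_s$ will be required this time.

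First I would note that each local set $\bb_\kappa=\bb_{\P_\kappa}(\t)$ is $w_{\P_\kappa}$-reduced by Lemma \ref{localbasered}, and that by Lemma \ref{multstillred} multiplying by $z_\kappa\in L^*$ preserves $w_{\P_\kappa}$-reduceness. Feeding the sets $z_\kappa\bb_\kappa$ and the assumed strict valuation inequalities into Theorem \ref{intbase1} then yields at once that the union $\bigcup_{\kappa=1}^s z_\kappa\bb_\kappa$ is $w_S$-reduced.

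Next I would normalize. Since scalar multiplication by elements of $K^*$ preserves $w_S$-reduceness (the remark immediately after Definition \ref{taured}), writing $\{b_1,\dots,b_n\}:=\bigcup_{\kappa=1}^s z_\kappa\bb_\kappa$, the rescaled family $(b_i/\pi^{\lfloor w_S(b_i)\rfloor})_{1\leq i\leq n}$ is still $w_S$-reduced; by construction each of its elements has $w_S$-value in $[0,1)$, and this is exactly $w_S$-orthonormality. To conclude, I would verify the count $\sum_\kappa \#\bb_\kappa = \sum_\kappa\deg f_{\P_\kappa}=n$ and then appeal to Theorem \ref{newdesofint}: any $w_S$-orthonormal family is in particular $w_S$-semi-orthonormal, so an $n$-element one is automatically a $\p$-integral basis of $\oo$.

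The one point requiring a moment of care, and therefore the main obstacle to the counting step, is making sure that $\bigcup_\kappa z_\kappa\bb_\kappa$ really consists of $n$ pairwise distinct nonzero elements. This is free from the $w_S$-reduceness established in the first step, since a $w_S$-reduced family is $K$-linearly independent and hence contains neither zero nor any repetitions. Beyond this bookkeeping, the substantive content has already been packaged into Theorems \ref{intbase1} and \ref{newdesofint}, so the argument reduces to a verification that the hypotheses line up.
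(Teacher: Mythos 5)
Your proof is correct and follows the same route the paper intends: the paper derives this theorem in one line by combining Theorem \ref{intbase1} (whose hypothesis (\ref{ineqcon}) is exactly the strengthened condition) with the normalization remark after Definition \ref{taured} and Theorem \ref{newdesofint}. Your additional care about the element count and distinctness is a harmless elaboration of the same argument.
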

The idea of using multipliers to construct integral bases goes back to Ore (1925). In \cite{HN} a similar way of determining adequate multipliers is presented. An advantage of our choice is that in practice the multipliers $z_\kappa$ are simple. That is, many exponents $\epsilon_j$ in (\ref{multichoice}) can be chosen to be zero. Often we may take
$$
z_\kappa=\prod\limits_{j<\kappa} \Phi_j(\t),\quad 1\leq\kappa\leq s.
$$
Since $\deg \Phi_j=n_{\P_j}$ and $\bb_{\P_j}=\{g_0(\t),\dots\,g_{n_{\P_j}}(\t)\}$ with $g_m\in A[x]$ monic of degree $m$ for $1\leq j\leq s$, the degree of $\prod_{j<\kappa} \Phi_j(x)g_m$ is equal $\sum_{j<\kappa}n_{\P_j}+m$, and the basis $\bb$ is in that particular case triangular. Even though, the multipliers $z_\kappa$ are not always that simple, our choice leads in many cases to a partly triangular basis $\bb$. Hence, the resulting $\p$-integral basis $\big(b_i/\pi^{\lfloor w_S(b_i)\rfloor}\big)_{ 1\leq i\leq n}$ can be transformed quickly into a triangular or Hermite basis.

\begin{algorithm}
\caption{: Computation of a $\p$-integral basis}\label{AlgopINT}
\begin{algorithmic}[1]

\REQUIRE Monic separable and irreducible polynomial $f\in A[x]$, an uniformizer $\pi$ of a non-zero prime ideal $\p$ of $A$, and a boolean variable $\mathrm{red}$.
\ENSURE A $\p$-integral basis of $\oo$, which is additionally $w_{S}$-orthonormal if $\mathrm{red}=\mathrm{TRUE}$. 

\STATE Algorithm \ref{AlgoMontes}($f$,\,$\pi$)
\FOR{$\P_i|\p$}

\STATE Determine $\bb_i=\bb_{\P_i}(\t)$ with $\bb_{\P_i}$ as in Definition \ref{divpolset}
\STATE Determine $\Phi_i$ by Algorithm \ref{SFL} satisfying, if $\mathrm{red}=\mathrm{FALSE}$, the conditions of Theorem \ref{INtBasis} and else the conditions of Theorem \ref{INtBasisred}
\ENDFOR
\STATE $\{b_1,\dots,b_n\}\leftarrow  \bigcup_{\kappa=1}^sz_\kappa\bb_\kappa$
\STATE \textbf{return} $\left( b_i/\pi^{\lfloor \ww{b_i}_{S}\rfloor}\right)_{1\leq i\leq n}$

\end{algorithmic}
\end{algorithm}

In order to determine the exponents $\epsilon_j$ and the precision of the approximations $\Phi_j$ so that $z_\kappa$ satisfies the conditions of Theorem \ref{INtBasis} or Theorem \ref{INtBasisred}, we have to compute the values $\ww{z_\kappa b}_{{\P_j}}$, for all $1\leq \kappa,j\leq s$ and $b\in \bb_{\kappa}$. That is, we need to determine the values $v_{\P_\kappa}(\Phi_{j}(\t))$ and $v_{\P_j}(b)$, for $1\leq \kappa,j\leq s$ and all $b\in \bb_\kappa$. In \cite[Proposition 4.7]{NewC} concrete formulas can be found, which only depend on the data computed along Algorithm \ref{AlgoMontes}. Hence, these values can be computed as a by-product at cost zero. Thus, the cost of the determination of the integers $\epsilon_j$ and the precision of the approximations $\Phi_j$ can be neglected. 


In order to determine an integral basis of $\oo$ (i.e. an $A$-basis), we may compute $\p$-integral bases $\bb_{\p}$ of $\oo$ for any prime ideal $\p$ with $v_\p(\dsc f)> 1$ and transform it into a triangular basis. Then by an easy application of the CRT one can combine the ``local" bases $\bb_\p$ to a global one.

\subsection{Complexity}

For the subsequent complexity analysis we define $\delta:=v_\p (\dsc f)$ the $\p$-valuation of the discriminant of $f$. The following steps dominate the runtime of Algorithm \ref{AlgopINT}:
\begin{enumerate}
\item Montes algorithm \label{en1}
\item Computation of local sets $\bb_i=\bb_{\P_i}(\t)$, for $1\leq i\leq s$ \label{en2}
\item  Computation of multiplier $z_i$, for $1\leq i\leq s$:\label{en3}
\begin{enumerate}
\item Determining $\Phi_i$ with necessary precision \label{en31}
\item Computing $z_\P:=\prod_{1\leq i\leq s} \Phi_i^{\epsilon_i}$\label{en32}
\end{enumerate}
\item Computing $  \bigcup_{1\leq i\leq s}z_i\bb_i$\label{en4}
\end{enumerate}

We admit fast multiplication techniques of Sch\"onhage-Strassen \cite{Gath2}. Let $R$ be a ring and let $g_1,g_2\in R[x]$ be two polynomials, whose degrees are bounded by $d_1$ and $d_2$, respectively. Then, the multiplication $g_1\cdot g_2$ needs at most $O(\max\{d_1,d_2\}^{1+\epsilon})$ operations in $R$. We may consider the elements in $A$ to be finite $\pi$-adic developments whose length is at most $\delta+1$ by \cite[Thm. 3.14]{BNS}. We fix a system of representatives $\mathcal{R}$ of $A/\p$ and call an operation in $A$ \emph{$\p$-small} if it involves two elements belonging to $\mathcal{R}$. Hence, any multiplication in $A$ can be realized with at most $O(\delta^{1+\epsilon})$ $\p$-small operations.
We assume that the residue field $A/\p$ is finite with $q$ elements. \\

\noindent\textbf{\ref{en1}. Montes algorithm}:

The Montes algorithm has a cost of $O\left(n^{2+\epsilon}+n^{1+\epsilon}\delta\log q+n^{1+\epsilon}\delta^{2+\epsilon}\right)$
$\p$-small operations \cite[Thm. 5.15]{BNS}. \\

\noindent\textbf{\ref{en2}. Local sets}:

We begin with analyzing the cost of determing $\bb_i=\bb_{\P_i}(\t)$ for one $i\in \{1,\dots,s\}$ as defined in Definition \ref{divpolset}. We fix $\P=\P_i$ corresponding to the type $\ti=\ti_\P$ and consider $g_m(x):=\prod_{i=0}^r\phi_i(x)^{c_i}$ with $m=\sum_{i=0}^rc_im_i,\quad 0\leq c_i<m_{i+1}/m_i$ for $0<m<n_\P$. Let $d_i:=m_{i+1}/m_i-1$ and define $g(x):=\prod_{i=0}^r\phi_i(x)^{d_i}$. Then, $g_m(x)| g(x)$ for $0<m<n_\P$. Thus, the cost of computing $g$ by brute force is dominating the complexity of the computation of $g_0,\dots, g_{n_\P-1}$. For any power $\phi_i^{d_i}$ in $g$ we count $d_i-1$ multiplications in $A[x]$. Since $g$ is the product of $r+1$ powers we can determine $g$ by 
$$\sum_{i=0}^rd_i-1+r=\left(\sum_{i=0}^rd_i\right)-1<\sum_{i=0}^r(m_{i+1}/m_i)=\sum_{i=0}^re_if_i=n_\P$$ 
multiplications in $A[x]$. Hence we can compute $\bb_\P$ with at most $n_{\P}$ multiplications in $A[\t]$. Thus, the cost of the computation of $\bb_{\P_j}$ is $n$ multiplications in $A[\t]$ or equivalently, $O\left(n^{2+\epsilon}\delta^{1+\epsilon}\right)$ $\p$-small operations.\\

\noindent\textbf{\ref{en31} Okutsu approximation}:

In order to compute the multipliers $z_\kappa$ defined in (\ref{multichoice}), we have to improve the Okutsu approximations $\Phi_i$ to an adequate precision. According to \cite[Theorem 5.16]{BNS}, the cost of the computation of an Okutsu approximation $\Phi_i$ with precision $\nu$ at $\P_i$ (that is, $w_{\P_i}(\Phi_i(\t))\geq \nu$) is given by
$$
O(nn_{\P_i}\nu^{1+\epsilon}+n\delta^{1+\epsilon})
$$ 
$\p$-small operations.

The following technical lemmas provide concrete bounds for the precision $\nu$ of the Okutsu approximation $\Phi_i$, for $1\leq i\leq s$, which is sufficient in order to determine a $\p$-integral basis with Algorithm \ref{AlgopINT}. 

In the following observation we assume that the multipliers $z_\kappa$ are given by 
$$
z_\kappa=\prod\limits_{\substack{ j=1\\ j\neq \kappa}}^s \Phi_j^{\epsilon_j}(\t),\quad \text{all }\epsilon_j=1.
$$
Although in practice many of the exponents $\epsilon_j$ can be chosen to be zero, for the complexity estimation we consider the worst case $\epsilon_j=1$, for $j\neq \kappa$.
\begin{lemma}\label{BoundPrec}
For $1\leq i\leq s$, let $\bb_i=\{b_{i,j}\mid 0\leq j < n_{\P_i}\}$ and $\Phi_i$ such that
\begin{align}\label{crazycondi}
w_{\P_i}(\Phi_i(\t))\geq \max\Big\{\max_{1\leq \kappa<i\leq s}\{H_{i,\kappa}\},\max_{1\leq i<\kappa\leq s}\{H_{i,\kappa}\}+1\Big\}, 
\end{align}
where
$$
H_{i,\kappa}:=\max_{0\leq l< n_{\P_\kappa}}\Big\{w_{\P_\kappa}\Big( b_{\kappa,l}\prod_{ \substack{ j=1\\ j\neq \kappa}}^s\Phi_j(\t)\Big)- w_{\P_i}\Big({b_{\kappa,l}\prod_{ \substack{ j=1\\ j\neq \kappa,i}}^s\Phi_j(\t)}\Big) \Big\}.
$$
Then, $\{b_1,\dots,b_n\}=\bigcup_{\kappa=1}^sz_i\bb_\kappa$, with $z_\kappa:=\Phi_1(\t)\cdots \Phi_{\kappa-1}(\t)\cdot\Phi_{\kappa+1}(\t)\cdots \Phi_{s}(\t)$ is $w_S$-semi-reduced. In particular, the family
$$
\frac{b_i}{\pi^{\lfloor w_S(b_i)\rfloor}},\quad 1\leq i\leq n
$$
is a $\p$-integral basis of $I$. 
 \end{lemma}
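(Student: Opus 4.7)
The plan is to reduce the statement to an application of Theorem \ref{globalargument} followed by Theorem \ref{newdesofint}. By Lemma \ref{localbasered} each set $\bb_\kappa=\bb_{\P_\kappa}(\t)$ is $w_{\P_\kappa}$-reduced, and the union $\bigcup_{\kappa=1}^{s}z_\kappa\bb_\kappa$ has $\sum_{\kappa=1}^{s}n_{\P_\kappa}=n$ elements. Once this union is shown to be $w_S$-semi-reduced, normalising each element by $\pi^{\lfloor w_S(\cdot)\rfloor}$ turns it into a $w_S$-semi-orthonormal set of cardinality $n$, which by Theorem \ref{newdesofint} is a $\p$-integral basis of $\oo$.

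The substantive work is to verify the two floor-inequality hypotheses of Theorem \ref{globalargument} for every $\kappa$ and every $b\in\bb_\kappa$. For $i\neq\kappa$ I would factor
\[ w_{\P_i}(z_\kappa b)=w_{\P_i}(\Phi_i(\t))+w_{\P_i}\Big(b\prod_{j\neq\kappa,i}\Phi_j(\t)\Big), \]
and observe that the very definition of $H_{i,\kappa}$ gives
\[ w_{\P_i}\Big(b\prod_{j\neq\kappa,i}\Phi_j(\t)\Big)\ge w_{\P_\kappa}(z_\kappa b)-H_{i,\kappa} \]
uniformly in $b\in\bb_\kappa$. Combining the two produces the key inequality
\[ w_{\P_i}(z_\kappa b)-w_{\P_\kappa}(z_\kappa b)\ge w_{\P_i}(\Phi_i(\t))-H_{i,\kappa}. \]

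From here the precision hypothesis (\ref{crazycondi}) splits into the two cases required by Theorem \ref{globalargument}: for $\kappa<i$ it yields $w_{\P_i}(\Phi_i(\t))\ge H_{i,\kappa}$, hence $w_{\P_i}(z_\kappa b)\ge w_{\P_\kappa}(z_\kappa b)$ and thus $\lfloor w_{\P_\kappa}(z_\kappa b)\rfloor\le \lfloor w_{\P_i}(z_\kappa b)\rfloor$, which is hypothesis (1); for $i<\kappa$ it yields $w_{\P_i}(\Phi_i(\t))\ge H_{i,\kappa}+1$, so $w_{\P_i}(z_\kappa b)\ge w_{\P_\kappa}(z_\kappa b)+1$ and therefore $\lfloor w_{\P_\kappa}(z_\kappa b)\rfloor<\lfloor w_{\P_i}(z_\kappa b)\rfloor$, which is hypothesis (2). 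The ``$+1$'' in (\ref{crazycondi}) is engineered precisely to promote a weak valuation inequality to a strict one after taking floors, and I do not foresee any substantive obstacle beyond this piece of bookkeeping.
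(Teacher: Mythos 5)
Your proposal is correct and follows essentially the same route as the paper: both use the additivity of $w_{\P_i}$ to split off $w_{\P_i}(\Phi_i(\t))$, apply the definition of $H_{i,\kappa}$ to get the key inequality $w_{\P_i}(z_\kappa b)-w_{\P_\kappa}(z_\kappa b)\ge w_{\P_i}(\Phi_i(\t))-H_{i,\kappa}$, and then read off the two floor conditions (the paper cites Theorem \ref{INtBasis}, which is itself just Theorems \ref{globalargument} and \ref{newdesofint} packaged together, so your direct appeal to those is the same argument). Your explicit handling of the floors in the strict case is in fact cleaner than the paper's phrasing of its intermediate item (2), which contains a misplaced ``$+1$''.
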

\begin{proof}
We show that the conditions on the $\Phi_i$ can be translated to the following statement: For $1\leq \kappa\leq s$ and for $0\leq l<n_{\P_\kappa}$ it holds
\begin{enumerate}
\item $ w_{\P_\kappa}(z_\kappa b_{\kappa,l})\leq  w_{\P_i}(z_\kappa b_{\kappa,l})$ for $1\leq \kappa<i\leq s$ and
\item $  w_{\P_\kappa}(z_\kappa b_{\kappa,l})\leq  w_{\P_i}(z_\kappa b_{\kappa,l})+1$ for $1\leq i<\kappa\leq s$.
\end{enumerate}
Then, the statement of the lemma follows from Theorem \ref{INtBasis}.

The inequality $w_{\P_i}(\Phi_i(\t))\geq H_{i,\kappa}$, for $\kappa<i$, implies that, for $0\leq l<n_{\P_\kappa}$,
\begin{align*}
w_{\P_i}(\Phi_i(\t))&\geq w_{\P_\kappa}\Big( b_{\kappa,l}\prod_{ \substack{ j=1\\ j\neq \kappa}}^s\Phi_j(\t)\Big)- w_{\P_i}\Big(b_{\kappa,l}\prod_{ \substack{ j=1\\ j\neq \kappa,i}}^s\Phi_j(\t)\Big)\\
\Longleftrightarrow \quad w_{\P_i}\Big(b_{\kappa,l}\prod_{ \substack{ j=1\\ j\neq \kappa}}^s\Phi_j(\t)\Big)& \geq w_{\P_\kappa}\Big( b_{\kappa,l}\prod_{ \substack{ j=1\\ j\neq \kappa}}^s\Phi_j(\t)\Big),
\end{align*}
which proves the first item. Analogously, the inequality $w_{\P_i}(\Phi_i(\t))\geq H_{i,\kappa}+1$, for $\kappa>i$, implies the second item.
\end{proof}
Analogously to the last proof one can show with Theorem \ref{INtBasisred} the following statement.
\begin{corollary}If we require 
$$
w_{\P_i}(\Phi_i(\t))> \max_{1\leq\kappa\leq s}\{H_{i,\kappa}\mid \kappa\neq i\},
$$
for $1\leq i\leq s$, instead of (\ref{crazycondi}), then the $\p$-integral basis from the last lemma is $w_{S}$-orthonormal.
\end{corollary}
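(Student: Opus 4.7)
The plan is to mirror the proof of Lemma~\ref{BoundPrec} almost verbatim, substituting Theorem~\ref{INtBasisred} for Theorem~\ref{INtBasis}. The only real content is to translate the stated precision bound on $w_{\P_i}(\Phi_i(\t))$ into the strict inequality $w_{\P_\kappa}(z_\kappa b_{\kappa,l}) < w_{\P_i}(z_\kappa b_{\kappa,l})$ that Theorem~\ref{INtBasisred} requires, for every pair $i\neq\kappa$ and every divisor-polynomial value $b_{\kappa,l}\in\bb_\kappa$.

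The key algebraic manipulation is the identity $z_\kappa = \Phi_i(\t)\cdot\prod_{j\neq \kappa,i}\Phi_j(\t)$, which gives
$$
w_{\P_i}\Big(b_{\kappa,l}\prod_{j\neq \kappa,i}\Phi_j(\t)\Big) = w_{\P_i}(z_\kappa b_{\kappa,l}) - w_{\P_i}(\Phi_i(\t)).
$$
Inserting this into the definition of $H_{i,\kappa}$, the hypothesis $w_{\P_i}(\Phi_i(\t)) > H_{i,\kappa}$ becomes, after taking the maximum over $l$,
$$
w_{\P_i}(\Phi_i(\t)) > w_{\P_\kappa}(z_\kappa b_{\kappa,l}) - w_{\P_i}(z_\kappa b_{\kappa,l}) + w_{\P_i}(\Phi_i(\t))
$$
for every $l$, which rearranges directly to $w_{\P_\kappa}(z_\kappa b_{\kappa,l}) < w_{\P_i}(z_\kappa b_{\kappa,l})$. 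This is exactly the hypothesis of Theorem~\ref{INtBasisred}, so $\bigcup_{\kappa}z_\kappa\bb_\kappa$ is $w_S$-reduced; normalizing by $\pi^{\lfloor w_S(b_i)\rfloor}$ yields a $w_S$-orthonormal $\p$-integral basis by the same argument used at the end of Lemma~\ref{BoundPrec} (combined with Theorem~\ref{newdesofint}).

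There is no serious obstacle; the only point worth double-checking is that a \emph{single} bound $w_{\P_i}(\Phi_i(\t)) > \max_{\kappa\neq i}\{H_{i,\kappa}\}$ now suffices to enforce the strict inequality in both directions $\kappa < i$ and $\kappa > i$. This is the structural reason the ``$+1$'' present in Lemma~\ref{BoundPrec} disappears: the floor operator is gone, so the asymmetry between the two cases in Theorem~\ref{INtBasis} collapses into the uniformly strict condition of Theorem~\ref{INtBasisred}. Everything else is bookkeeping identical to the previous lemma.
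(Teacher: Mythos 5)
Your proposal is correct and follows exactly the route the paper intends: the paper's own proof is the one-line remark that the claim follows ``analogously to the last proof'' using Theorem~\ref{INtBasisred} in place of Theorem~\ref{INtBasis}, and your computation—substituting $w_{\P_i}\bigl(b_{\kappa,l}\prod_{j\neq\kappa,i}\Phi_j(\t)\bigr)=w_{\P_i}(z_\kappa b_{\kappa,l})-w_{\P_i}(\Phi_i(\t))$ into $H_{i,\kappa}$ and cancelling—is precisely the fleshed-out version of that remark. Your closing observation about why the ``$+1$'' disappears is also accurate.
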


By Lemma \ref{BoundPrec} we deduce a lower bound for the precision of the approximations $\Phi_i$, for $1\leq i\leq s$. 
\begin{lemma}\label{hardcorelem}
For $1\leq i\neq\kappa\leq s$ we have
$$
H_{i,\kappa}=O(\delta).
$$
\end{lemma}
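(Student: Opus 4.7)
The plan is to expand $H_{i,\kappa}$ by additivity of the valuations $w_{\P_\kappa}$ and $w_{\P_i}$ and bound each summand by $O(\delta)$. The expression inside the $\max$ decouples as
\begin{align*}
w_{\P_\kappa}(b_{\kappa,l}(\t)) - w_{\P_i}(b_{\kappa,l}(\t)) + w_{\P_\kappa}(\Phi_i(\t)) + \sum_{j \neq \kappa, i}\bigl[w_{\P_\kappa}(\Phi_j(\t)) - w_{\P_i}(\Phi_j(\t))\bigr].
\end{align*}
By the discussion in Subsection \ref{OkApp}, each $w_{\P_l}(\Phi_j(\t))$ with $l \neq j$ is a closed-form function of the OM data (\cite[Prop.~4.7]{NewC}) and is independent of the precision of $\Phi_j$, so $H_{i,\kappa}$ is actually an intrinsic invariant of $f$ and $\p$ that does not depend on any refinement inside the algorithm.

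Next, since every $\Phi_j(\t)$ and $b_{\kappa,l}(\t)$ lies in $\oo$, every valuation appearing in the expansion is non-negative. For any $g \in A[x]$ with $g(\t) \neq 0$, the norm-resultant identity
$$
\sum_{\P \mid \p} e(\P/\p)\, f(\P/\p)\, w_\P(g(\t)) = v_\p(\operatorname{Res}(g, f))
$$
combined with non-negativity of each summand gives $w_\P(g(\t)) \leq v_\p(\operatorname{Res}(g, f))$ for every $\P \mid \p$. Thus it suffices to bound the relevant resultants by a constant multiple of $\delta$.

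Factoring $f = \prod_l f_{\P_l}$ in $\hat A_\p[x]$ yields $v_\p(\operatorname{Res}(\Phi_j, f)) = \sum_l v_\p(\operatorname{Res}(\Phi_j, f_{\P_l}))$. For $l \neq j$, since $\Phi_j$ is an Okutsu approximation to $f_{\P_j}$, the cross-resultant $v_\p(\operatorname{Res}(\Phi_j, f_{\P_l}))$ stabilizes at $v_\p(\operatorname{Res}(f_{\P_j}, f_{\P_l}))$. The classical factorization
$$
v_\p(\dsc f) = \sum_l v_\p(\dsc f_{\P_l}) + 2 \sum_{l < m} v_\p(\operatorname{Res}(f_{\P_l}, f_{\P_m}))
$$
then forces $v_\p(\operatorname{Res}(f_{\P_j}, f_{\P_l})) \leq \delta/2$, so $w_{\P_l}(\Phi_j(\t)) = O(\delta)$ for $l \neq j$; in particular $w_{\P_\kappa}(\Phi_i(\t)) = O(\delta)$. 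The same resultant argument applied factor-by-factor to $b_{\kappa,l} = \prod_t \phi_t^{c_t}$ with $c_t < m_{t+1}/m_t$ (so $\deg b_{\kappa,l} < n_{\P_\kappa}$) bounds each divisor-polynomial contribution $w_{\P_\kappa}(b_{\kappa,l}(\t))$ and $w_{\P_i}(b_{\kappa,l}(\t))$ by $O(\delta)$.

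The main obstacle is the third step: verifying that refining $\Phi_j$ does not inflate the cross-resultants $v_\p(\operatorname{Res}(\Phi_j, f_{\P_l}))$ for $l \neq j$, so that the bound $v_\p(\operatorname{Res}(f_{\P_j}, f_{\P_l})) \leq \delta/2$ persists independently of precision, and then packaging the $O(1)$-many individual $O(\delta)$ bounds uniformly in $l$. Once these are in hand, summing the bounded contributions yields $H_{i,\kappa} = O(\delta)$, as required.
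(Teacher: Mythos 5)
Your overall strategy is the same as the paper's: expand $H_{i,\kappa}$ by additivity of $w_{\P_\kappa}$ and $w_{\P_i}$, discard the non-positive terms (legitimate, since $b_{\kappa,l},\Phi_j(\t)\in\oo$), and bound the remaining positive contributions $w_{\P_\kappa}(b_{\kappa,l})$ and $\sum_{j\neq\kappa}w_{\P_\kappa}(\Phi_j(\t))$ by $O(\delta)$. Your treatment of the multiplier terms is sound: for $j\neq\kappa$ one has $n_{\P_\kappa}\,w_{\P_\kappa}(\Phi_j(\t))=v_\p(\res(\Phi_j,f_{\P_\kappa}))$, this cross-resultant is stable under single-factor lifting and equals $v_\p(\res(f_{\P_j},f_{\P_\kappa}))$ up to the relevant precision (this is exactly \cite[Prop.~4.7]{NewC}), and the factorization of $v_\p(\dsc f)$ into local discriminants and cross-resultants bounds the sum over $j\neq\kappa$ by $\delta/2$. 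You also correctly observe that the dangerous term $w_{\P_i}(\Phi_i(\t))$, which grows with precision, never appears in $H_{i,\kappa}$.

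The gap is in the last step, the bound $w_{\P_\kappa}(b_{\kappa,l}(\t))=O(\delta)$. You assert that ``the same resultant argument applied factor-by-factor'' to $b_{\kappa,l}=\prod_t\phi_t^{c_t}$ handles it, but the $\phi_t$ are \emph{not} irreducible factors of $f$ in $\hat{A}_\p[x]$ (they are lower-level Okutsu approximations to $f_{\P_\kappa}$ itself, of degree $m_t<n_{\P_\kappa}$), so the quantities $v_\p(\res(\phi_t,f_{\P_\kappa}))$ do not occur among the cross-resultants in the discriminant factorization, and writing $w_{\P_\kappa}(\phi_t(\t))\leq v_\p(\res(\phi_t,f))/n_{\P_\kappa}$ is circular, since the $\P_\kappa$-summand of that resultant is the very quantity you are trying to bound. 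The paper closes this step by invoking \cite[Proposition~1.3]{BNS}: every monic $g\in A[x]$ of degree $<n_{\P_\kappa}$ satisfies $w_{\P_\kappa}(g(\t))\leq\mu\leq\delta/n_{\P_\kappa}$. (Alternatively one can argue that $\sum_{m}\lfloor\hat{v}(g_m(\t_{\P_\kappa}))\rfloor=v_\p(\operatorname{ind}(f_{\P_\kappa}))\leq\delta/2$ with non-negative summands, so each $\hat{v}(g_m(\t_{\P_\kappa}))=O(\delta)$.) Either way, some input beyond the discriminant--resultant identity is needed here, and as written your argument does not supply it.
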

\begin{proof}
We keep the notation from Lemma \ref{BoundPrec}. For $1\leq \kappa,j\leq s$ and $0\leq l< n_{\P_\kappa}$ it holds 
$$
\ w_{\P_\kappa}( b_{\kappa,l}z_\kappa)- w_{\P_i}\Big(b_{\kappa,l}\prod_{ \substack{ j=1\\ j\neq \kappa,i}}^s\Phi_j(\t)\Big) 
\leq \ w_{\P_\kappa}( b_{\kappa,l}z_\kappa),
$$
since $b_{\kappa,l},\ \Phi_i(\t)\in \oo$. We estimate $w_{\P_\kappa}( b_{\kappa,l}z_\kappa)$ in order to determine a bound for $H_{i,\kappa}$. By definition, the elements $b_{\kappa,l}\in \bb_\kappa$ are given by $b_{\kappa,l}=g_{\kappa,l}(\t)$ with $g_{\kappa,l}(x)\in A[x]$ monic of degree $m<n_{\P_\kappa}$. In \cite[Proposition 1.3]{BNS} it is shown that all monic polynomials $g \in A[x]$ of degree less than $n_{\P_\kappa}$ satisfy $v_{\P_\kappa}(g(\theta))/e(\P_\kappa/\p)\leq \mu$ for a certain constant $\mu$ which satisfies $\mu\leq \delta/n_{\P_\kappa}$. Hence, $w_{\P_\kappa}(b_{\kappa,l})\leq  \delta/n_{\P_\kappa}$, for all $0\leq l< n_{\P_\kappa}$. 

We consider $w_{\P_\kappa}( z_\kappa)=\sum_{j=1,j\neq \kappa}^sv_{\P_\kappa}(\Phi_j(\t))/e(\P_\kappa/\p)$. Let $f_{1},\dots,f_{s}$ be the irreducible factors of the polynomial $f$ in $\kp[x]$. As in (\ref{completiondef}), we identify the completion of $L$ at the prime ideal $\P_\kappa$ with $\Kp(\t_{\P_\kappa})$, for $1\leq \kappa\leq s$, where $\t_{\P_\kappa}$ denotes a root of the irreducible factor $f_{\kappa}$. Let $\hat{v}$ be the extension of $v_{\p}$ to the algebraic closure of $\Kp$. With (\ref{valequ}) it holds
$$
\delta=\sum_{i=1}^sv_\p(\dsc (f_{\P_i}))+2\sum_{1\leq i<j\leq s}v_\p(\res(f_{\P_i},f_{\P_j}))
$$  
\cite[\textrm{III}.\textsection 2-4]{JPS}, and we deduce $w_{\P_\kappa}( z_\kappa)\leq \delta$. Together with the previous estimations, we obtain $H_{i,\kappa}=O(\delta)$.

\end{proof}
According to the last lemma, we compute in Algorithm \ref{AlgopINT} approximations $\Phi_i$ with a precision $\nu=O(\delta)$ at cost of
$$
O(nn_{\P_i}\delta^{1+\epsilon})
$$ 
$\p$-small operations. In the worst case we have to determine all approximations $\Phi_i$ with that precision. As $\sum_{i=1}^{s}n_{\P_i}=n$, the cost of computing the adequate approximations can be estimated by $O(n^2\delta^{1+\epsilon})$ $\p$-small operations. \\

\noindent\textbf{\ref{en32} Multiplier}:

We analyze the cost of the computation of the multipliers $z_\kappa$, for $1\leq \kappa\leq s$. As mentioned before, the worst case occurs if any multiplier $z_\kappa$ is given by
$$
z_\kappa=\prod_{ \substack{ j=1\\ j\neq \kappa}}^s\Phi_j(\t).
$$
\begin{lemma}
Let $s\geq 2$. The multipliers $z_1,\dots,z_s$ can be determined by $2(s-3)+s$ multiplications in $A[\t]$.
\end{lemma}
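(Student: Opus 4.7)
The plan is to use the standard prefix/suffix products technique so that no factor $\Phi_j(\theta)$ is multiplied in from scratch more than twice, regardless of how many of the $z_\kappa$ contain it.

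First I would introduce the auxiliary products
$$
P_i := \Phi_1(\theta)\cdots\Phi_i(\theta), \qquad S_j := \Phi_j(\theta)\cdots\Phi_s(\theta),
$$
for $1 \le i \le s-1$ and $2 \le j \le s$. Built iteratively from the recursions $P_1 := \Phi_1(\theta)$, $P_i := P_{i-1}\cdot \Phi_i(\theta)$ for $2 \le i \le s-1$, and symmetrically $S_s := \Phi_s(\theta)$, $S_j := \Phi_j(\theta)\cdot S_{j+1}$ for $s-1 \ge j \ge 2$, these two families cost $s-2$ multiplications each in $A[\theta]$, for a running total of $2(s-2)$.

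Next I would observe that
$$
z_1 = S_2, \qquad z_s = P_{s-1}, \qquad z_\kappa = P_{\kappa-1}\cdot S_{\kappa+1} \text{ for } 1 < \kappa < s,
$$
so that the two extreme multipliers $z_1$ and $z_s$ are already available at no further cost, while each of the $s-2$ remaining multipliers is produced by a single multiplication in $A[\theta]$. Summing the three contributions gives
$$
(s-2)+(s-2)+(s-2) = 3(s-2) = 2(s-3)+s
$$
multiplications, as claimed. For $s=2$ the formula returns $0$, matching the fact that $z_1 = \Phi_2(\theta)$ and $z_2 = \Phi_1(\theta)$ need no multiplication.

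There is no real obstacle here: the argument is a standard left/right sweep and the only thing to be careful about is the bookkeeping at the boundary indices $\kappa=1$ and $\kappa=s$, where one of the two factors $P_{\kappa-1}$ or $S_{\kappa+1}$ is empty, so that the corresponding $z_\kappa$ is read off directly from the sweep without spending a multiplication.
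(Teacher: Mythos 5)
Your proposal is correct and follows essentially the same prefix/suffix-product strategy as the paper; the only difference is bookkeeping (the paper stops its two lists at $\Phi_1\cdots\Phi_{s-2}$ and $\Phi_3\cdots\Phi_s$, spending $2(s-3)$ multiplications on precomputation and exactly one multiplication on each of the $s$ multipliers, while you extend the sweeps one step further and read off $z_1$ and $z_s$ for free). Both accountings yield the same total $3(s-2)=2(s-3)+s$.
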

\begin{proof}
Initially we compute the products
\begin{align}
&\Phi_1\Phi_2,\Phi_1\Phi_2\Phi_3,\dots, \Phi_1\cdots\Phi_{s-2}\text{ and}\label{Liste1}\\
&\Phi_{s-1}\Phi_s,\Phi_{s-2}\Phi_{s-1}\Phi_s,\dots, \Phi_3\cdots\Phi_{s}.\label{Liste2}
\end{align}
This can be realized by $2(s-3)$ multiplications. Every $z_i$ can be written as a product of two factors, where each of them belongs to list (\ref{Liste1}), list (\ref{Liste2}), or is one of the $\Phi_i$. Hence, to determine the multipliers $z_1,\dots,z_s$ we have to apply exactly $s$ additional multiplications.
\end{proof}
The complexity of any multiplication in the realization of the multipliers can be estimated by $O(n^{1+\epsilon})$ operations in $A$, since the degree of any product of approximations in (\ref{Liste1}) and (\ref{Liste2}) is less than $n$, for $1\leq i\leq s$. As $s\leq n$, the complexity of the computation of $z_1,\dots,z_s$ is equal to $O(sn^{1+\epsilon})=O(n^{2+\epsilon})$ operations in $A$; that is, $O\left(n^{2+\epsilon}\delta^{1+\epsilon}\right)$ $\p$-small operations.

\noindent\textbf{\ref{en4} Basis multiplication}:

We determine the products $z_\kappa b_{\kappa,j}$, for $1\leq \kappa\leq s$ and $0\leq j< n_{\P_\kappa}$. Any $b_{\kappa,j}$ is given by $b_{\kappa,j}=g_{\kappa,j}(\t)$, where $g_{\kappa,j}(x)$ is a monic polynomial in $A[x]$ of degree $j<n_{\P_\kappa}$. For $1\leq \kappa\leq s$, the multiplier $z_\kappa$ is given by a polynomial in $A[x]$ of degree less than $n-n_{\P_\kappa}$ evaluated in $\t$. Hence, the computation of $z_\kappa b_{\kappa,j}$ can be realized at cost of $O(n^{1+\epsilon})$ operations in $A$. As $\sum_{i=1}^sn_{\P_i}=n$, we can compute all sets $z_\kappa \bb_\kappa$ at the cost of $O(n^{2+\epsilon})$ operations in $A$ which equates $O\left(n^{2+\epsilon}\delta^{1+\epsilon}\right)$ $\p$-small operations.

We summarize the results in the following lemma.

\begin{lemma}\label{compli}
Algorithm \ref{AlgopINT}  determines a $\p$-integral basis of $\oo$ by at most $$O\left(n^{1+\epsilon}\delta\log q+n^{1+\epsilon}\delta^{2+\epsilon}+n^{2+\epsilon}\delta^{1+\epsilon}\right)$$
$\p$-small operations.
\end{lemma}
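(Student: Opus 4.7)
My plan is to deduce the bound by summing the four complexity contributions worked out in the immediately preceding analysis of items (1)--(4) of Algorithm \ref{AlgopINT}. First I would recall the four estimates individually: (i) by \cite[Thm.~5.15]{BNS} the Montes algorithm runs in $O(n^{2+\epsilon}+n^{1+\epsilon}\delta\log q+n^{1+\epsilon}\delta^{2+\epsilon})$ $\p$-small operations; (ii) the construction of each local set $\bb_{\P_i}(\theta)$ via the factorization $g_m=\prod_{i=0}^r\phi_i^{c_i}$ of Theorem \ref{divisorpoltheo} requires at most $n_{\P_i}$ multiplications in $A[\theta]$, so the $s$ local sets together cost $n$ multiplications, i.e.\ $O(n^{2+\epsilon}\delta^{1+\epsilon})$ $\p$-small operations; (iii) lifting each Okutsu approximation $\Phi_i$ to the precision demanded by condition (\ref{crazycondi}) of Lemma \ref{BoundPrec} costs $O(nn_{\P_i}\nu^{1+\epsilon}+n\delta^{1+\epsilon})$, and by Lemma \ref{hardcorelem} with $\nu=O(\delta)$ together with $\sum_i n_{\P_i}=n$ this accumulates to $O(n^2\delta^{1+\epsilon})$; (iv) computing the multipliers $z_\kappa$ and subsequently the products $z_\kappa\bb_\kappa$ each entail $O(n)$ multiplications of polynomials of degree less than $n$ over $A$, contributing $O(n^{2+\epsilon}\delta^{1+\epsilon})$ $\p$-small operations.

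The next step is simply to add the four estimates and simplify. The plain $n^{2+\epsilon}$ term from (i) and the $n^2\delta^{1+\epsilon}$ term from (iii) are both dominated by $n^{2+\epsilon}\delta^{1+\epsilon}$ (in the only nontrivial case $\delta\geq 1$; otherwise $\p$ is a good prime and the algorithm is trivial), and the three remaining $n^{2+\epsilon}\delta^{1+\epsilon}$ contributions from (ii), (iii), (iv) collapse into one. The two surviving non-dominated terms $n^{1+\epsilon}\delta\log q$ and $n^{1+\epsilon}\delta^{2+\epsilon}$ are inherited directly from the Montes bound. This yields exactly the stated complexity.

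The only step that is not pure bookkeeping is the invocation of Lemma \ref{hardcorelem} to justify that precision $\nu=O(\delta)$ suffices for every $\Phi_i$ in the worst case $z_\kappa=\prod_{j\neq\kappa}\Phi_j(\theta)$; everything else is a routine summation. So the main effort in writing the proof cleanly would be to confirm that the precision bound $\nu=O(\delta)$ fits through \cite[Thm.~5.16]{BNS} and that the worst-case choice of multipliers (all $\epsilon_j=1$) really does dominate the general case, both of which are already in place from Lemmas \ref{BoundPrec} and \ref{hardcorelem}. After that, the assembly into the stated bound is immediate.
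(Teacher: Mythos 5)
Your proposal reproduces the paper's own argument: the lemma is simply the summation of the four cost estimates (Montes algorithm, local sets, Okutsu approximations via Lemmas \ref{BoundPrec} and \ref{hardcorelem}, and the multiplier/basis products) established in the preceding complexity subsection, with the $n^{2+\epsilon}$ and $n^2\delta^{1+\epsilon}$ terms absorbed into $n^{2+\epsilon}\delta^{1+\epsilon}$. This matches the paper's route exactly, including the reliance on $\nu=O(\delta)$ and the worst case $\epsilon_j=1$ for the multipliers.
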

\section{Experimental results}\label{Sec4}

We have implemented the $\p$-integral basis algorithm \ref{AlgopINT} from Section \ref{p_intBase} in \texttt{Magma} \cite{BCP} for an algebraic function field $F$ determined by a monic separable and irreducible polynomial $f\in A[x]$ with $A:=k[t]$ for a field $k$ as specified. Furthermore we have extended this algorithm to an integral basis algorithm; that is, we determine an integral basis of $\oo_F:=\mathrm{Cl}(A,F)$ by computing a triangular $\p$-integral basis of $\oo_\p$ for all $\p\in \mathrm{Spec}(A)$ with $v_\p(\dsc f)>1$ and merge those by an application of the CRT.

  We will compare the runtime of our algorithm with the implementation of the variant of the Round $2$ algorithm \cite{Round2} and with the implementation of the Quotient method presented in \cite{HN} both in \texttt{Magma}. 
All timings are in seconds and taken on a Linux server, with an Intel Xeon processor, running at 2.27 GHz, with 12 GB of RAM memory. For the first examples we use families of global function fields, which cover all the computational difficulties of the Montes algorithm \cite{GNP}. Later, we use function fields over the rationals. Note that we assume that all prime polynomials in $A$, which are divisible  by the ramified prime ideals of $\oo_F$, are precomputed.

\subsection{Global function fields}
At first we consider global function fields; that is, $f(t,x)\in \F_{q}[t,x]$ is defined over a finite field $\F_q$ with $q$ elements.

\subsubsection{Example 1}\label{ex1}
Let $f(t,x)=((x^6+4p(t)x^3+3p(t)^2x^2+4p(t)^2)^2+p(t)^6)^3+p(t)^k\in \mathbb{F}_{7}[t,x]$ with $p(t) = t^3+2$ and $1\leq k\leq 500$ and denote by $F$ the induced function field. Those prime ideals of $\oo_F$ which divide $p$ are among the ramified prime ideals of $\oo_F$. For $k\geq 17$ the ideal $p\cdot \oo_F$ splits into 6 prime ideals.
\medskip\\
\includegraphics[width=12.5cm]{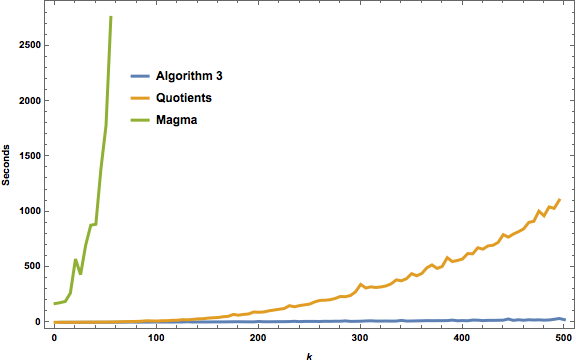}

\subsubsection{Example 2}
We consider the function field from Example \ref{ex1} for prime polynomials $p\in A$ with $1\leq \deg(p)\leq 220$ for $k=23$.
\medskip\\
\includegraphics[width=12.5cm]{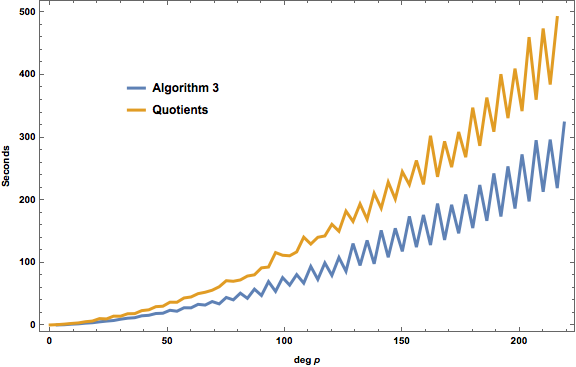}

\subsubsection{Example 3}

For $1\leq l\leq 7$ and $p := t^2+4$ we take the family of polynomials $f_l\in \F_{7}[t,x]$ as defined below and denote by $F_l$ the induced function fields. For $l>1$ we have $p\cdot \oo_{F_l}=\P^{\deg f_l}$.\\

\begin{tabular}{|l||c|c||c|c|}\hline $f_l$&Multiplier & Quotients &  \texttt{Magma} \\\hline 
$f_{ 1}(x)=x^2+p$ &0 &0 & 0 \\\hline
$f_{ 2}(x)=f_{1}(x)^{2}+(p-1)p^{3}x$ & 0& 0 &0 \\\hline
$f_{ 3}(x)=f_{2}(x)^{3}+p^{11}$  & 0& 0 &1 \\\hline
$f_{ 4}(x)=f_{3}(x)^{3}+p^{29}x f_{2}(x)$ & 0&   0&520 \\\hline
$f_{ 5}(x)=f_{4}(x)^{2}+(p-1)p^{42}xf_{1}(x)f_{3}(x)^2$   & 1& 8&60000 \\\hline
$f_{ 6}(x)=f_{5}(x)^{2}+p^{88}xf_{3}(x)f_{4}(x)$  & 10&20  &$-$  \\\hline
$f_{ 7}(x)=f_{6}(x)^{3}+p^{295}xf_{2}(x)f_{4}(x)f_{5}(x)$&  $226$ &$21530$ &$-$  \\\hline

 \end{tabular}

\subsubsection{Example 4}

We consider $f(t,x) := x^{4330} -(t^2+1)(x^2-1)- (t^8+2t^6 +1)x$ in $\F_3[t,x]$ and denote by $F/\F_3$ the induced function field. It holds $t\cdot \oo_{F}=\prod_{i=1}^{14}\P_i$.\\

\begin{minipage}{\linewidth}

\renewcommand{\footnoterule}{}
\renewcommand{\thefootnote}{\alph{footnote}}

\begin{tabular}{|c|c||c|c|}\hline Multiplier & Quotients &  \texttt{Magma} \\\hline 
 68 &486 &$-$\footnotemark[1]\  \\\hline
 \end{tabular}
\footnotetext[1]{All virtual memory has been exhausted, so  \texttt{Magma} cannot perform this statement.}

\end{minipage}

\subsection{Function fields over $\Q$}
The following examples are taken from \cite{Decker}. For $1\leq l\leq 6$ we take the polynomials $g_l\in \Q[t,x]$ as below and denote by $F_l$ the induced function fields. \\


\resizebox{0.83\textwidth}{!}{\begin{minipage}{\linewidth}

\renewcommand{\footnoterule}{}
\renewcommand{\thefootnote}{\alph{footnote}}

\begin{tabular}{|c|c||c|c||c|}\hline $l$&$g_l$&Ramification \\\hline 
$1$& $(x^4+2t^3x^2+t^6+t^5x)^3+(tx)^{11}$ &$t\cdot \oo_{F_1}=\P^{12}$ \\\hline
$2$& $x^{22}+t^{22}+z^{22}+2((tz)^{11}-(tx)^{11}+(xz)^{11}),\ z=t-2x+1$ &$t\cdot \oo_{F_2}=\P_1^{2}\P_2^{2}$\ \footnotemark[2] \\\hline
$3$& $ (x^{15}+2t^{38})(x^{19}+7t^{52})+x^{100}$ &$t\cdot \oo_{F_3}=\P_1^{19}\P_2^{15}\P_3\cdots \P_6$ \\\hline
$4$& $x^{200}+tx^{13}+t^4x^5+t^5+2t^4+t^3$ &$t\cdot \oo_{F_4}=\P_1^{13}\P_2^{187}$ \\\hline
$5$& $ x^{401} + t^{500} + t^2$ &$t\cdot \oo_{F_5}=\P^{401}$ \\\hline
$6$& $ x^{500}+tx^2+t^{400}$ &$t\cdot \oo_{F_6}=\P_1^{2}\P_2^{498}$ \\\hline

\end{tabular}\\\medskip

\footnotetext[2]{There are more prime ideals of $\oo_{F_2}$, which are ramified and do not divide $t$.}

\end{minipage}}
\\\medskip

\begin{tabular}{|c|c||c|c||c|c|}\hline $l$&$\deg g_l$&Multiplier & Quotients &  \texttt{Magma} \\\hline 
$1$& $12$ & 0&  0 & 2 \\\hline
$2$& $22$  &35 &53  &183 \\\hline
$3$& $100$   & 0&0 &$-$ \\\hline
$4$& $200$  & 1& 1 &$-$  \\\hline
$5$& $401$ & 0&1 & $-$ \\\hline
$6$& $500$  &1 &5  &$-$ \\\hline

 \end{tabular}

\end{document}